\documentclass[12pt]{amsart}
\usepackage{epsfig,color}
\usepackage{blindtext}
\usepackage{graphicx}
\usepackage{enumitem}
\usepackage{url}
\usepackage{amssymb}
\usepackage{graphicx,import}
\usepackage{comment}
\usepackage{esint}
\usepackage{xcolor}
\usepackage{mathtools}
\usepackage{comment}
\usepackage{tikz}

\usepackage[margin = 1in] {geometry}

\usepackage{hyperref}
\usepackage{dsfont}

\newtheorem{theorem}{Theorem}[section]

\newtheorem{proposition}[theorem]{Proposition}
\newtheorem{lemma}[theorem]{Lemma}

\theoremstyle{definition}

\newtheorem*{remark*}{Remark}
\newtheorem*{acknowledgements}{Acknowledgements}
\newtheorem{remark}[theorem]{Remark}

\numberwithin{equation}{section}

\newcommand{\R}{\mathbb{R}}
\newcommand{\C}{\mathbb{C}}

\newcommand{\Z}{\mathbb{Z}}
\newcommand{\eps}{\varepsilon}

\title{A spinor proof of the classification of stable minimal surfaces in $\mathbb{R}^3$}
\author{Douglas Stryker}
\address{Department of Mathematics, Stanford University, Building 380, Stanford, CA 94305, USA}
\email{dstryker@stanford.edu}

\begin{document}

\begin{abstract}
We give a proof that every complete two--sided stable minimal surface in $\mathbb{R}^3$ is flat using the index theory for Dirac operators on twisted spinor bundles.
\end{abstract}

\maketitle

\section{Introduction}

We use the index theory for Dirac operators on twisted spinor bundles over noncompact manifolds introduced by \cite{Gromov-Lawson:spin} to give a new proof of the classification of complete two--sided stable minimal surfaces in $\R^3$:
\begin{theorem}[\cite{doCarmo-Peng:stable, Fischer-Colbrie-Schoen:stable, Pogorelov:stable}]\label{thm:main}
Every complete two--sided stable minimal immersion $\Sigma^2 \to \R^3$ is flat.
\end{theorem}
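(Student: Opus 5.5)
We argue by contradiction, following the Gromov--Lawson strategy of relative index theory for twisted Dirac operators on a complete noncompact manifold. Suppose $\Sigma$ is complete, two--sided, stable and not flat. Stability gives a positive function $u$ with $-\Delta u - |A|^2 u = 0$ on $\Sigma$ (Fischer-Colbrie--Schoen). We then conformally change the metric to $\tilde g = u^{2} g$, or more generally $u^{2\beta}g$ with $\beta$ chosen suitably. The formula $\tilde K = u^{-2\beta}(K - \beta\Delta\log u)$, together with $K=-\tfrac12|A|^2$ and $\Delta \log u = -|A|^2 - |\nabla u|^2/u^2$, gives
\[
\tilde K \geq u^{-2\beta}\bigl(-\tfrac12 |A|^2 + \beta |A|^2\bigr) >0
\]
where $A\neq 0$, provided $\beta>1/2$. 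The price is that $\tilde g$ need not be complete. Following the known approach to this problem, we take $\beta\in(1/2,1]$ and argue that $\tilde g$ is complete, or at least that the spinor argument only requires a weighted version in which $u$ enters the Dirac operator as a potential. Concretely, we plan to use the operator $D + \tfrac{\beta}{2}\,\nabla\log u\cdot$ on $g$, conjugated, so that completeness of $g$ is all that is needed.

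Next we set up the index problem. $\Sigma$ is a Riemann surface; we pass to its universal cover, so $\Sigma$ is either $\C$ or the disk, and it carries a spin structure. By Gromov--Lawson, we twist the spinor bundle by a bundle $E$ pulled back via a proper degree-nonzero map $f:\Sigma\to S^2$, obtained from a large-area-contracting map built from the Gauss map or from exhaustion. Since $\Sigma$ is noncompact, we instead compare two bundles $E_0$ and $E_1$ that agree outside a compact set. The relative index theorem gives
\[
\operatorname{ind}(D_{E_0})-\operatorname{ind}(D_{E_1}) = \int_\Sigma \mathrm{ch}(E_0)-\mathrm{ch}(E_1) = \deg f \neq 0.
\]
The Schrödinger--Lichnerowicz formula for the twisted, weighted operator then yields
\[
\int |\nabla \psi|^2 + \int \Bigl(\tfrac{\tilde K}{2} - |R^E|\Bigr)|\psi|^2 \leq 0 .
\]
Choosing $f$ to be $\varepsilon$-contracting on the region where curvature is positive makes the twisting curvature smaller than $\tilde K/2$. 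This forces $L^2$ harmonic spinors to vanish, contradicting the nonzero relative index.

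The main obstacle is that $\tilde K>0$ only pointwise, with no uniform lower bound, while Gromov--Lawson needs uniform positivity outside a compact set to get Fredholmness and to dominate the twisting curvature. The first thing to try is to localize. Pick a geodesic disk $B_R$ in $\tilde g$ where $\tilde K\geq \kappa>0$ near a point with $A\neq 0$, and let $f$ be a degree-one map collapsing the complement of $B_R$ to a point with $|df|^2\lesssim R^{-2}$. For this to work we need $\tilde g$-balls of large radius in which $\int \tilde K$ is controlled. If that fails, the fallback is a Bonnet--Myers/Llarull-type rigidity argument: $\tilde K>0$ with $\tilde g$ complete forces $\Sigma$ to be compact, contradicting properness in $\R^3$. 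That in turn reduces the problem to proving completeness of $u^{2\beta}g$ for some $\beta>1/2$. For this last step we would cite or reprove the Fischer-Colbrie estimate via the spinor Bochner inequality.
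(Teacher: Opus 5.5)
Your proposal has a genuine gap at exactly the step you flag as the main obstacle, and neither proposed fix closes it. The Fredholm issue by itself could be handled as in the paper, by crossing with a large round sphere. The real problem is \emph{domination}. Your computation $\tilde K\geq(\beta-\tfrac12)u^{-2\beta}|A|^2$ is correct, but ``not flat'' only gives $\tilde K>0$ somewhere, with no quantitative amount. The argument needs enough positivity on the support of $f$ to dominate the twisting curvature of a bundle with nonzero index, and in dimension two there is no such room.

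\emph{Localizing fails.} A geodesic disk on which $\tilde K\geq\kappa$ has radius at most $\pi/\sqrt{\kappa}$. So a collapsing map with $|df|^2\lesssim R^{-2}$ cannot have twisting curvature small compared to $\kappa$; the borderline case of the Llarull-type inequality is the round sphere itself.

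\emph{Globally it is no better.} Take the twist you describe, a line bundle with $\int_{\mathbb{S}^2}c_1=1$, and write $iF^E=\lambda\,dA_{\tilde g}$. The two-dimensional twisted Lichnerowicz formula gives $\mathcal{R}^E=\mp\lambda$ on $\mathbf{S}_\C^\pm\otimes E$, to be compared with $\frac14R=\frac12\tilde K$, while $\int_\Sigma\lambda\,dA_{\tilde g}=2\pi\deg f$. Domination, i.e.\ $|\lambda|\leq\frac12\tilde K$ wherever $f$ is nonconstant, therefore forces $\int_\Sigma\tilde K\,dA_{\tilde g}\geq4\pi|\deg f|$. But Cohn--Vossen gives $\int_\Sigma\tilde K\,dA_{\tilde g}\leq2\pi$ as soon as $\tilde g$ is complete. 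So the map you want exists on \emph{no} complete noncompact surface with $\tilde K\geq0$, stable or not. The weighted-operator variant has the same defect: its positivity is still a multiple of $|A|^2$, and non-flatness gives no lower bound on the total mass.

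\emph{The fallback is false.} Pointwise $\tilde K>0$ plus completeness does not force compactness (the paraboloid), since Bonnet--Myers needs a uniform lower bound. Completeness of $u^{2\beta}g$ is a separate, nontrivial argument of Fischer-Colbrie--Schoen, not a spinor Bochner estimate. What it actually yields is parabolicity via Cohn--Vossen/Huber, after which a log-cutoff in the conformally invariant stability inequality kills $A$. That is the classical uniformization proof, in which the index theory plays no role.

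The missing ideas are the ones the paper uses to create room.
\begin{enumerate}
\item It does not try to contradict non-flatness directly. It assumes super-quadratic area growth, derives a contradiction, and then finishes with the log-cutoff.
\item It gets positivity everywhere by passing to $\Sigma\times\R\subset\R^4$, which is still two-sided stable. There the Hardy inequality $-\triangle_M-\frac{1}{4\mathbf{r}^2}\geq0$ added to the stability/Gauss-equation inequality $-\triangle_M+R_M\geq0$ gives $-2\triangle_M+R_M-\frac{1}{4\mathbf{r}^2}\geq0$. This positivity is spectral and decays like $r^{-2}$.
\item The growth hypothesis supplies the matching room. Since $K\leq0$, both $\Sigma$ and $\Sigma\times\R$ are Cartan--Hadamard, so radial projections contract by $\rho/R$. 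Super-quadratic growth forces $l(r)\geq2a(r)/r\gg r$. Together these produce degree-one maps $\Sigma\times\R\to\mathbb{S}^3$, constant outside a compact set, that are $\delta/r^2$-contracting on 2-forms with $\delta$ arbitrarily small, exactly matching the $r^{-2}$ decay.
\item Fredholmness comes from coercivity at infinity under \emph{spectral} positivity: the Kato/Lichnerowicz estimate of Lemma \ref{lem:estimate}, which needs $\alpha<4$. This is applied after crossing with a large round sphere and smashing, so that Anghel's relative index theorem applies.
\end{enumerate}
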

Each of the original proofs \cite{doCarmo-Peng:stable, Fischer-Colbrie-Schoen:stable, Pogorelov:stable}, as well as the subsequent proofs \cite{Colding-Minicozzi:stable, Munteanu-Sung-Wang:stable}, employs either the Gauss--Bonnet formula or the uniformization theorem. In our spinor proof, the only dimension--specific facts that we exploit are:
\begin{enumerate}
	\item minimal surfaces in $\R^3$ have nonpositive sectional curvature (whereas in higher dimensions, minimal hypersurfaces in $\R^{n+1}$ have nonpositive \emph{Ricci} curvature),
	\item a circle with large length admits a degree one map to the unit circle with small Lipschitz constant (whereas in higher dimensions, a closed connected Riemannian manifold with large volume does not necessarily admit a degree one map to the unit sphere with small Lipschitz constant, e.g.\ a long skinny ellipsoid).
\end{enumerate}

We emphasize that, by using the spinor technique, this proof is fundamentally about \emph{scalar curvature} (not, for instance, Ricci curvature or bi--Ricci curvature); consequently, there seems to be no chance to classify stable minimal hypersurfaces in $\R^{n+1}$ for $n \geq 4$ along these lines (see the discussion in \cite[\S1.1]{Chodosh-Li-Minter-Stryker:stable}). For progress on the classification in higher dimensions, we refer the reader to \cite{Chodosh-Li:r4, Chodosh-Li:r4aniso, Catino-Mastrolia-Roncoroni:r4, Chodosh-Li-Minter-Stryker:stable, Antonelli-Xu:spec, Mazet:r6, CCMMR}. We also emphasize that our proof does not use geometric measure theory, unlike many of the arguments in higher dimensions (with the exception of \cite{Chodosh-Li:r4, Catino-Mastrolia-Roncoroni:r4, CCMMR}). 

We recall that by the log--cutoff (see \cite[Theorem 2.10]{Colding-Minicozzi:minimal}), it suffices to show that $\Sigma$ has quadratic area growth. Our proof is divided into three parts.

\subsection{Twisted spinors and spectral scalar curvature}
The main ingredient from the spinor theory is the following obstruction to positive ``spectral'' scalar curvature.
\begin{theorem}\label{thm:main-obstruction}
There are constants $C_n > 0$ so that the following holds. Suppose $(M^n, g)$ is a complete spin manifold of dimension $n \geq 2$, $0 < \alpha < 4$ is a constant, and $h : M \to \R_{>0}$ is a smooth positive function satisfying
\begin{equation}\label{eqn:main-spec} \int_M \alpha |\nabla \phi|^2 + (R_M - h)\phi^2 \geq 0 \end{equation}
for all smooth, compactly supported functions $\phi$ on $M$, where $R_M$ is the scalar curvature.

Then there is no Lipschitz map $f: M \to \mathbb{S}^n$ satisfying
\begin{itemize}
	\item $f$ is has nonzero degree,
	\item $f$ is constant outside a compact subset of $M$, and
	\item $f$ is $C_n h$--contracting on 2--forms (meaning $|df_x(v_1) \wedge df_x(v_2)| \leq C_nh(x)|v_1 \wedge v_2|$ for any $x \in M$ and $v_1,\ v_2$ vectors in $T_xM$).
\end{itemize}
\end{theorem}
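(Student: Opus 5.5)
We argue by contradiction, following the Gromov--Lawson relative index theory. Suppose such an $f$ exists. Pull back a bundle over $\mathbb{S}^n$ with nonzero topological index: for $n$ even, take the positive half-spinor bundle $E_0 = S^+(\mathbb{S}^n)$ with its Levi-Civita connection. For $n$ odd, first pass to $M \times \mathbb{S}^1_L$ (or $M\times\R$) and compose $f$ with a degree-one map $\mathbb{S}^n\times\mathbb{S}^1\to\mathbb{S}^{n+1}$; we track how the $2$-form contraction constant and the spectral inequality \eqref{eqn:main-spec} behave under this product, choosing $L$ large.

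Let $E = f^*E_0$ and form the twisted Dirac operator $D_E$ on $S(M)\otimes E$. Since $f$ is constant outside a compact set $K$, the bundle $E$ is trivialized with its flat connection there. Gromov--Lawson's relative index theorem, comparing $E$ with the trivial bundle of the same rank, gives
\[
\operatorname{ind}(D_E) - \operatorname{ind}(D_{\text{triv}}) = \int_M \hat A(M)\,\mathrm{ch}(E) = c\deg f \neq 0,
\]
where $c\neq 0$ is the top Chern character of $S^+(\mathbb{S}^n)$. This holds provided both operators are Fredholm in a suitable sense, or at least have $L^2$-index that can be made sense of near infinity. So we want a nonzero $L^2$ harmonic twisted spinor $\psi$, i.e.\ $D_E\psi=0$. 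The standard route, when positivity near infinity is lacking, is to work on an exhaustion by compact domains with boundary conditions, or to use that $h>0$ to get invertibility at infinity. We would combine these: on $M\setminus K$ the twisted curvature term vanishes, and the spectral condition with $h>0$ plays the role of positive scalar curvature at infinity.

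The analytic heart is a spectral Lichnerowicz estimate. The Weitzenböck formula is $D_E^2 = \nabla^*\nabla + \tfrac14 R_M + \mathcal{R}^E$. For the pulled-back spinor bundle, $|\mathcal{R}^E| \le c_n' \,|\Lambda^2 df|$, and hence $|\mathcal{R}^E| \le c_n' C_n h$; we choose $C_n$ so that $\mathcal R^E\ge -\tfrac14(1-\delta) h$. For a spinor $\psi$ we use the refined Kato inequality: harmonic spinors satisfy $|\nabla\psi|^2\ge \frac{n}{n-1}|\nabla|\psi||^2$, and in any case $|\nabla\psi|^2\ge|\nabla|\psi||^2$. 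Integrating,
\[
0=\int |D_E\psi|^2 \ge \int |\nabla|\psi||^2 + \tfrac14 (R_M - (1-\delta)h)|\psi|^2 .
\]
Applying \eqref{eqn:main-spec} with $\phi=|\psi|$ (after cutoff) gives $\int R_M|\psi|^2\ge \int h|\psi|^2-\alpha|\nabla|\psi||^2$. Substituting, the right side is at least $(1-\tfrac\alpha4)\int|\nabla|\psi||^2+\tfrac\delta4\int h|\psi|^2>0$ unless $\psi=0$. This is exactly where $\alpha<4$ enters. The same estimate, applied to compactly supported $\psi$, shows that $D_E$ and $D_{\text{triv}}$ have trivial $L^2$ kernel and cokernel, provided $\int|D\psi|^2\ge \epsilon\int h|\psi|^2$.

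The main obstacle is making the index statement rigorous when $h$ may decay at infinity, since then there is no uniform spectral gap and the operators need not be Fredholm on $L^2$. We would first try the weighted approach: replace $L^2$ by the Hilbert space completed with the norm $\int|D\psi|^2$. The coercive estimate above then shows this norm dominates $\int h|\psi|^2$, so by Lax--Milgram the equation $D_E^2 u = D_E \sigma$ is solvable. Here $\sigma$ is a section built from the difference of the two bundles; it is compactly supported because the bundles agree outside $K$. The relative index then produces a nonzero harmonic spinor, mirroring Cecchini--Zeidler style arguments. Alternatively, we could truncate to large compact domains $\Omega$ with local (chirality/APS-type) boundary conditions whose boundary term has a good sign, compute the index there as $c\deg f$, and obtain a kernel element contradicting the integrated estimate, which includes boundary terms. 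In the odd-dimensional case we must also check that the product with the circle preserves the spectral inequality, using $\phi(x,t)$ independent of $t$ and separating variables.
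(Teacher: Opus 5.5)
Your vanishing computation is correct and is essentially the paper's Lemma~\ref{lem:estimate}: Weitzenb\"ock, Kato, and the spectral hypothesis applied to $|\phi||\psi|$ with cutoffs, with $\alpha<4$ entering exactly where you say. The choice of pulled-back bundle is also standard.

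The genuine gap is the step that must produce a nonzero $L^2$ harmonic twisted spinor. Any relative index theorem (Gromov--Lawson's, or Anghel's, which the paper uses) needs coercivity at infinity: $\|\mathbf{D}_E\psi\|_{L^2}\ge c\|\psi\|_{L^2}$ for $\psi$ supported off a compact set. When $h$ decays (exactly the case needed later, $h=\frac{1}{4\mathbf{r}^2}$), your estimate only gives $\|\mathbf{D}_E\psi\|^2\gtrsim\int h|\psi|^2$, and the operators need not be Fredholm.

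Your substitutes do not close this gap.
\begin{itemize}
\item The Lax--Milgram step is circular. Solving $\mathbf{D}_E^2u=\mathbf{D}_E\sigma$ weakly in the completion of $C_c^\infty$ under $\|\mathbf{D}_E\cdot\|_{L^2}$ gives $\mathbf{D}_Eu=P\sigma$, where $P$ is the projection onto $\overline{\mathrm{Ran}\,\mathbf{D}_E}$. So $\sigma-\mathbf{D}_Eu$ is just the $L^2$-projection of $\sigma$ onto $\ker\mathbf{D}_E$. It is nonzero only if you already know $\ker\mathbf{D}_E\neq 0$, and the topology of $E$ never enters. (Witten-type arguments escape this only because their seed section is asymptotically parallel, not compactly supported.)
\item The truncation route would need a boundary term of good sign (e.g.\ mean convexity) and a computation of the boundary-value index; you have neither. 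Moreover, \eqref{eqn:main-spec} only applies to compactly supported test functions, so it does not control $\int_\Omega R_M|\psi|^2$ when $\psi\neq0$ on $\partial\Omega$.
\end{itemize}

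The missing idea is Gromov--Lawson's stabilization, which the paper uses precisely to manufacture uniform positivity.
\begin{itemize}
\item Pass to $\tilde M=(M\times S^2,\ g\oplus\frac{2}{\kappa_0}g_{\mathbb{S}^2})$; use a round $S^3$ of scalar curvature $\kappa_0$ instead if $n$ is odd, which also fixes parity. The sphere adds the constant $\kappa_0$ to the scalar curvature.
\item Take the positive solution of $-\alpha\triangle u+(R_M-h)u=0$ (Fischer-Colbrie--Schoen) and extend it $S^2$-invariantly. This gives $-\alpha\triangle_{\tilde M}+R_{\tilde M}-(\kappa_0+\tilde h)\ge0$.
\item The estimate now yields coercivity at infinity with constant $\sqrt{\kappa_0}/2$. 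Anghel's theorem then gives $\mathrm{ind}(\mathbf{D}_E^+)=\int_{\tilde M}\widehat{\mathrm{ch}}(E)\wedge\hat{\mathbf{A}}(\tilde M)\neq0$.
\item To keep the geometric hypothesis, compose $f\times\mathrm{id}$ with a smashing map $s_n:\mathbb{S}^n\times\mathbb{S}^2\to\mathbb{S}^{n+2}$ that collapses $\{p_0\}\times\mathbb{S}^2$, so the new map is still constant off a compact set. Choosing $\kappa_0$ small relative to $C_n\min_K h$ and $\mathrm{Lip}\,f$ makes the result $(\mathrm{Lip}\,s_n)^2C_n(\kappa_0+\tilde h)$-contracting on $2$-forms.
\item Your vanishing argument with $\kappa=\kappa_0+\tilde h$ then gives the contradiction.
\end{itemize}
Your product with $\mathbb{S}^1_L$ or $\R$ fixes parity but adds no scalar curvature, so it cannot play this role.
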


We expect that Theorem \ref{thm:main-obstruction} is known, or at least unsurprising, to experts (see the related works \cite{Bei-Pipoli:spin, Hirsch-Kazaras-Khuri-Zhang:spec, Chai-Pyo-Wan:spec}). We note that when $\alpha = 0$, the curvature assumption \eqref{eqn:main-spec} reduces to the assumption of pointwise positive scalar curvature, where the result follows from \cite{Gromov-Lawson:spin}. One can view the relationship between our Theorem \ref{thm:main-obstruction} (also \cite[Theorem 0.1]{Bei-Pipoli:spin}) and the pointwise positive scalar curvature results of \cite{Gromov-Lawson:compact, Gromov-Lawson:spin} by the following analogy with the study of harmonic 1--forms. By the Bochner formula, any complete manifold with positive Ricci curvature admits no $L^2$--harmonic 1--forms. The same result also hold for two--sided stable minimal hypersurfaces in positively curved ambient spaces (see for instance \cite{Palmer:1-forms, Miyaoka:1-forms, Cao-Shen-Zhu:ends}), which fundamental exploits the fact that these hypersurfaces have positive ``spectral'' Ricci curvature.

There are two ways to approach the proof of Theorem \ref{thm:main-obstruction}. The first (and more classical) approach is to look at the warped product
\[ (M \times S^1,\ g\oplus C u^2g_{\mathbb{S}^1}), \]
where $u: M \to \R$ is a positive function satisfying $-\triangle_M u + (R_M - h)u = 0$, and $C > 0$ is any positive constant. As demonstrated by \cite[Proof of Theorem 4]{Fischer-Colbrie-Schoen:stable}, this warped product has pointwise positive scalar curvature. By taking $C$ sufficiently large, we can construct a new suitable map using the map $f$ and a smashing map (see \cite[Proof of Theorem 6.12]{Gromov-Lawson:spin}, and, for example, Figure \ref{fig:smash}), and then apply the result of \cite{Gromov-Lawson:spin} for pointwise positive scalar curvature. For example, this approach is taken in \cite[Theorem 4.1]{Orikasa:spin}.

We offer an alternative (and more direct) approach to the proof of Theorem \ref{thm:main-obstruction}. The condition \eqref{eqn:main-spec} is sufficient to directly deduce the nonexistence of a nontrivial $L^2$ section of the twisted spinor bundle constructed using the map $f$.\footnote{In fact, in the untwisted case, this was already shown by \cite{Bei-Pipoli:spin}. For similar computations in the twisted context, see \cite{Hirsch-Kazaras-Khuri-Zhang:spec, Chai-Pyo-Wan:spec}.} There is, however, a small subtlety in adapting the relative index theory to this setting. Indeed, in \cite{Gromov-Lawson:spin}, the index theory is developed under the assumption of pointwise lower bounds on the scalar curvature. We show that the weaker condition \eqref{eqn:main-spec} guarantees that the Dirac operator is ``coercive at infinity,'' whereby we can apply the general index theory developed in \cite{anghel:index}.

These ideas are carried out in \S\ref{sec:index-theory}.

\subsection{Spectral scalar curvature of stable minimal surfaces}
To make use of Theorem \ref{thm:main-obstruction}, we need to show that $\Sigma$ has a positive lower bound on its spectral scalar curvature (as in \eqref{eqn:main-spec}).

By the stability inequality and the Gauss equation, any two--sided stable minimal immersion $M^n \to \R^{n+1}$ satisfies
\begin{equation}\label{eqn:nonneg-spec} \int_M |\nabla \phi|^2 + R_M \phi^2 \geq 0 \end{equation}
for all smooth, compactly supported functions $\phi$ on $M$, where $R_M$ is the scalar curvature of $M$ in the pullback metric. The condition \eqref{eqn:nonneg-spec} gives us nonnegative spectral scalar curvature, but not \emph{positive} spectral scalar curvature.

Hence, we need to strengthen the lower bound on spectral scalar curvature. For this purpose, we recall the Hardy inequality for minimal hypersurfaces in $\R^{n+1}$ (see \cite[(1.5)]{Cabre-Miraglio:hardy}):
\begin{equation}\label{eqn:intro-hardy}
\int_M |\nabla \phi|^2 - \frac{(n-2)^2}{4\mathbf{r}^2}\phi^2 \geq 0
\end{equation}
for all smooth, compactly supported functions $\phi$ on $M$, where $\mathbf{r}$ is the Euclidean distance from the origin in $\R^{n+1}$.

Combining \eqref{eqn:nonneg-spec} and \eqref{eqn:intro-hardy}, we obtain
\begin{equation}\label{eqn:intro-spec}
\int_M 2|\nabla \phi|^2 + \left(R_M - \frac{(n-2)^2}{4\mathbf{r}^2}\right)\phi^2 \geq 0
\end{equation}
for any smooth compactly supported $\phi$. We emphasize that the function $h = \frac{(n-2)^2}{4\mathbf{r}^2}$ is identically zero when $n=2$, which is precisely the case of interest. We overcome this issue by considering $\Sigma \times \R$, which is a complete two--sided stable minimal hypersurface in $\R^4$ when $\Sigma$ is a complete two--sided stable minimal surface in $\R^3$. Then by \eqref{eqn:intro-spec}, $\Sigma \times \R$ satisfies the assumption of Theorem \ref{thm:main-obstruction} for $\alpha = 2$ and $h = \frac{1}{4\mathbf{r}^2} > 0$.

These ideas are carried out in \S\ref{sec:hardy}

\subsection{Contracting maps from stable minimal surfaces}
As discussed above, to take advantage of the room provided by the Hardy inequality, our aim is to construct a sufficiently contracting map from $\Sigma \times \R$ to $\mathbb{S}^3$ under the contradiction assumption that $\Sigma$ has faster--than--quadratic area growth. For the purpose of the introduction, we discuss the existence of a sufficiently contracting map from $\Sigma$ to $\mathbb{S}^2$, since this construction contains all of the crucial ideas but avoids more cumbersome details (the $\Sigma \times \R$ construction is carried out in \S\ref{sec:contracting}).

It follows from the Gauss equation that any minimal surface in $\R^3$ has nonpositive curvature. Since two--sided stability passes to covering spaces, we can assume that our complete two--sided stable minimal surface is simply connected. Hence, $\Sigma$ is a two--dimensional Cartan--Hadamard manifold. From this fact, under the contradiction hypothesis that the surface has faster--than--quadratic area growth, we know that the $r$--level set of the intrinsic distance has length at least $\tilde{C}_2\cdot r$ for any $r$ sufficiently large. We can now construct a degree one map from a large annulus in $\Sigma$ to $\mathbb{S}^2$ that is $\frac{C_2}{4r^2}$--contracting on 2--forms by mapping the $r$--level sets of instrinsic distance onto the $z$--level set circles in $\mathbb{S}^2$ (see Figure \ref{fig:intro}). Since the extrinsic Euclidean distance is at most the intrinsic distance, this map is $\frac{C_2}{4\mathbf{r}^2}$--contracting on 2--forms, as required to apply Theorem \ref{thm:main-obstruction} with $h = \frac{1}{4\mathbf{r}^2}$.

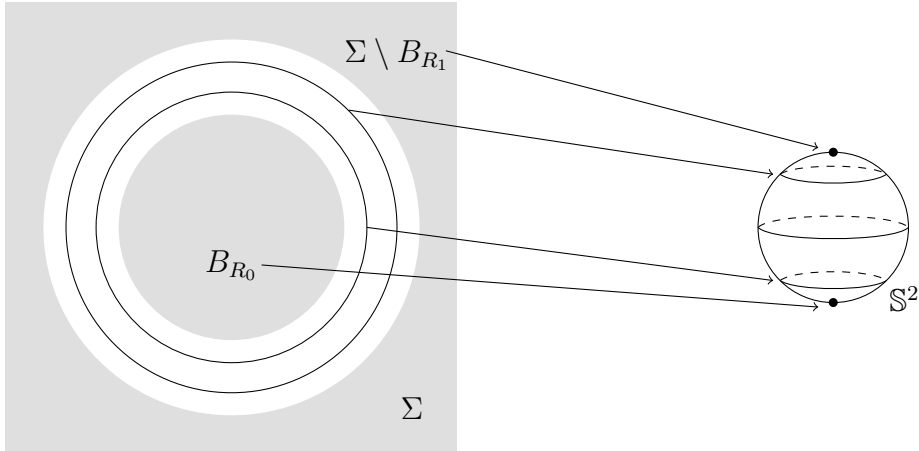
\begin{figure}
\begin{tikzpicture}
	\fill[gray!25!white] (-7, 3) -- (-1, 3) -- (-1, -3) -- (-7, -3);
	\fill[white] (-4, 0) circle (2.5);
	\fill[gray!25!white] (-4, 0) circle (1.5);
	\draw (-4, 0) circle (2.2);
	\draw (-4, 0) circle (1.8);
	\draw (-1.6, -2.4) node {$\Sigma$};
	\draw (-4, -0.5) node {$B_{R_0}$};
	\draw (-1.8, 2.3) node {$\Sigma \setminus B_{R_1}$};

	\draw (3, 0) .. controls (3.2, -0.2) and (4.8, -0.2) .. (5, 0);
	\draw[dashed] (3, 0) .. controls (3.2, 0.2) and (4.8, 0.2) .. (5, 0);
	\draw (3.293, 0.707) .. controls (3.5, 0.55) and (4.5, 0.55) .. (4.707, 0.707);
	\draw[dashed] (3.293, 0.707) .. controls (3.5, 0.85) and (4.5, 0.85) .. (4.707, 0.707);
	\draw (3.293, -0.707) .. controls (3.5, -0.85) and (4.5, -0.85) .. (4.707, -0.707);
	\draw[dashed] (3.293, -0.707) .. controls (3.5, -0.55) and (4.5, -0.55) .. (4.707, -0.707);
	\draw (4, 0) circle (1);
	\fill[black] (4, 1) circle (0.06);
	\fill[black] (4, -1) circle (0.06);
	\draw (4.95, -0.95) node {$\mathbb{S}^2$};
	
	\draw[->] (-1.15, 2.35) -- (3.8, 1.06);
	\draw[->] (-3.6, -0.5) -- (3.8, -1.06);
	\draw[->] (-2.44, 1.56) -- (3.2, 0.707);
	\draw[->] (-2.2, 0) -- (3.2, -0.707);
\end{tikzpicture}
\caption{Degree one contracting map from $\Sigma$ to $\mathbb{S}^2$.}
\label{fig:intro}
\end{figure}

\begin{remark}
Instead of working with a quadratically--decaying positive lower bound on spectral scalar curvature using the Hardy inequality, it is likely possible to give a similar proof in the case of a uniformly positive lower bound on spectral scalar curvature using the conformal change of \cite{Gulliver-Lawson:conformal}. Since the interplay between intrinsic and extrinsic distance becomes more complicated after this conformal change, we find our approach to be more convenient.
\end{remark}

\begin{acknowledgements}
The author is grateful to Otis Chodosh, Shunichiro Orikasa, and Thomas Tony for valuable feedback on an earlier draft, and in particular to Shunichiro Orikasa for pointing out the reference \cite{anghel:index}. The author is also grateful to Paul Minter and Lorenzo Sarnataro for insightful discussions about this work. The author was supported by the NSF grant DMS-2503279.
\end{acknowledgements}

\section{Spectral scalar curvature notation}

Let $(M, g)$ be a complete Riemannian manifold, and let $Q : M \to \R$ be a smooth function. We write
\begin{equation}\label{eqn:gen-spec} -\alpha \triangle + Q \geq 0 \end{equation}
to mean that the following inequality holds for any compactly supported smooth function $\phi$:
\[ \int_M \alpha |\nabla \phi|^2 + Q\phi^2 \geq 0. \]
In this paper, we consider the case where $Q = R_M - h$, where $R_M$ is the scalar curvature of $M$ and $h$ is a smooth function. In this case, we say that the manifold has \emph{spectral scalar curvature} bounded below by $h$. To justify the terminology, note that if $\alpha = 0$, then \eqref{eqn:gen-spec} implies that $R_M \geq h$. Since $-\triangle$ is a nonnegative operator, the condition \eqref{eqn:gen-spec} becomes weaker as $\alpha$ gets larger.

\section{Twisted spinors and spectral scalar curvature}\label{sec:index-theory}
We recall the basic setup and topological facts related to twisted spinor bundles on noncompact Riemannian manifolds, following \cite[Chapter IV, \S6]{Lawson-Michelson:spin}, which in turn follows the ideas of \cite{Gromov-Lawson:spin}.

Let $(M^n, g)$ be a complete Riemannian spin manifold of even dimension $n = 2k \geq 2$. Let $\mathbf{S}_{\C} \to M$ denote the complex spinor bundle of $M$ with canonical metric connection.

Let $E \to M$ be a Hermitian vector bundle together with a metric connection that has \emph{compact support}, meaning that the restriction of the bundle $E$ to the complement of a compact subset of $M$ is a trivial bundle with the trivial de Rham connection.

Let $\mathbf{D}_E$ be the Atiyah--Singer Dirac operator acting on $L^2$--sections of the tensor product $\mathbf{S}_\C \otimes E$. By \cite[Chapter II, Theorem 5.7]{Lawson-Michelson:spin}, $\mathbf{D}_E$ is a self-adjoint operator.

By \cite[Chapter II, Theorem 8.17]{Lawson-Michelson:spin}, we have the twisted Lichnerowicz formula
\begin{equation}\label{eqn:lich} \mathbf{D}_E^2 = \nabla^*\nabla + \frac{1}{4}R_M + \mathcal{R}^E, \end{equation}
where $\mathcal{R}^E$, given by \cite[Chapter II, (8.22)]{Lawson-Michelson:spin}, depends linearly on the curvature tensor of $E$. We note that, by assumption, $\mathcal{R}^E$ vanishes outside a compact subset $K \subset M$. Let $c > 0$ denote a positive constant so that $-\mathcal{R}^E \leq c\ \mathrm{Id}$.

Since $M$ is even dimensional, we can use the complex volume element to induce a $\Z_2$--grading on the complex spinor bundle $\mathbf{S}_{\C} = \mathbf{S}_\C^+ \oplus \mathbf{S}_\C^-$ (see \cite[Chapter II, \S6]{Lawson-Michelson:spin}). With respect to the orthgonal decomposition $\mathbf{S}_{\C} \otimes E= (\mathbf{S}_\C^+ \otimes E) \oplus (\mathbf{S}_\C^- \otimes E)$, the Dirac operator decomposes as
\[ \mathbf{D}_E = \left(\begin{matrix}
0 & \mathbf{D}_E^-\\
\mathbf{D}_E^+ & 0
\end{matrix}\right), \]
where $\mathbf{D}_E^{\pm}$ sends sections of $\mathbf{S}_\C^{\pm} \otimes E$ to sections of $\mathbf{S}_\C^{\mp}\otimes E$. Since $\mathbf{D}_E$ is self-adjoint, we have $\mathrm{ker}(\mathbf{D}_E^-) = \mathrm{coker}(\mathbf{D}_E^+)$. In particular, if $\mathrm{ker}(\mathbf{D}_E)$ is finite dimensional, then both $\mathrm{ker}(\mathbf{D}_E^+)$ and $\mathrm{coker}(\mathbf{D}_E^+)$ are finite dimensional and
\[ \mathrm{ind}(\mathbf{D}_E^+) = \mathrm{dim}(\mathrm{ker}(\mathbf{D}_E^+)) - \mathrm{dim}(\mathrm{ker}(\mathbf{D}_E^-)). \]
We note that, in this case, if $\mathbf{D}_E^+$ has nonzero index, then there is a nonzero $L^2$ section $\psi$ of $\mathbf{S}_C \otimes E$ satisfying $\mathbf{D}_E \psi = 0$.

\subsection{Index theory for the Dirac operator on twisted spinor bundles}
In \cite{Gromov-Lawson:spin} (and \cite[Chapter IV, \S6]{Lawson-Michelson:spin}), the index theory is developed under the assumption that $M$ has uniformly positive scalar curvature. In this subsection, we confirm that the index theory for operators that are ``coercive at infinity'' developed in \cite{anghel:index} (see also \cite[Theorem 6$\frac{4}{5}$]{Gromov:index}) applies when $M$ is only assumed to have uniformly positive \emph{spectral} scalar curvature\footnote{In fact, \cite{Lawson-Michelson:spin, Gromov-Lawson:spin} only require $M$ to have uniformly positive scalar curvature \emph{outside a compact set}. The same applies to our setting as well.}.

Before stating the relative index theorem, we point the reader to \cite[Chapter III, Example 11.13]{Lawson-Michelson:spin} for the definition of the total $\hat{\mathbf{A}}$--class of a vector bundle (here we use $\hat{\mathbf{A}}(M) \coloneq \hat{\mathbf{A}}(TM)$), and to \cite[Chapter III, (11.22)]{Lawson-Michelson:spin} for the definition of the Chern character $\mathrm{ch}(E)$ of a complex bundle $E$ (here we use $\widehat{\mathrm{ch}}(E) \coloneq \mathrm{ch}(E) - \mathrm{rank}(E)$).

\begin{theorem}\label{thm:index}
Suppose $M$ satisfies 
\begin{equation}\label{eqn:spec-scalar} -\alpha \triangle_M + R_M - \kappa \geq 0, \end{equation}
for some constants $0 < \alpha < 4$ and $\kappa_0 > 0$ and a function $\kappa \geq \kappa_0$. Then
\[ \mathrm{ind}(\mathbf{D}_{E}^+) = \int_M \widehat{\mathrm{ch}}(E) \wedge \hat{\mathbf{A}}(M). \]
\end{theorem}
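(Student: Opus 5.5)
The plan is to reduce Theorem \ref{thm:index} to Anghel's index theorem \cite{anghel:index} for operators that are \emph{coercive at infinity}. We must show there are a compact set $K' \subset M$ and a constant $\delta > 0$ with
\[ \|\mathbf{D}_E\psi\|_{L^2} \geq \delta\|\psi\|_{L^2} \quad \text{for all smooth compactly supported } \psi \text{ with } \mathrm{supp}\,\psi \subset M \setminus K'. \]
Anghel's theorem then says $\mathbf{D}_E^+$ is Fredholm. Its index depends only on the data on a compact set and is stable under compactly supported deformations that preserve coercivity at infinity. The index formula then follows exactly as in \cite[Chapter IV, \S6]{Lawson-Michelson:spin}. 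One compares with the bundle $E_0$ that is trivial outside $K$ via the relative index theorem. Equivalently, one excises to a compact double or closed manifold, where the Atiyah--Singer theorem gives $\int \widehat{\mathrm{ch}}(E)\wedge\hat{\mathbf{A}}$. The trivial bundle contributes zero index, since its integrand $\widehat{\mathrm{ch}}(E_0)$ vanishes identically. So the real content is the coercivity estimate, and I would only check that the gluing and relative-index arguments of \cite{Gromov-Lawson:spin} use nothing beyond coercivity at infinity (Anghel's framework is designed for exactly this).

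For the estimate, take $\psi$ supported outside $K$, so that $\mathcal{R}^E = 0$ on its support. Integrating the twisted Lichnerowicz formula \eqref{eqn:lich} gives
\[ \|\mathbf{D}_E\psi\|^2 = \int_M |\nabla\psi|^2 + \tfrac14 R_M|\psi|^2. \]
The key tool is Kato's inequality $|\nabla|\psi|| \leq |\nabla\psi|$, which holds wherever $\psi \neq 0$ and is used distributionally. Apply the hypothesis \eqref{eqn:spec-scalar} with $\phi = |\psi|$; this test function is Lipschitz and compactly supported, which is fine by approximation. This gives
\[ \tfrac14\int R_M|\psi|^2 \geq \tfrac14\int \kappa|\psi|^2 - \tfrac{\alpha}{4}\int |\nabla|\psi||^2. \]
Hence
\[ \|\mathbf{D}_E\psi\|^2 \geq \left(1 - \tfrac{\alpha}{4}\right)\int|\nabla\psi|^2 + \tfrac{\kappa_0}{4}\int|\psi|^2 \geq \tfrac{\kappa_0}{4}\|\psi\|^2, \]
where we used $\alpha < 4$. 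So we may take $K' = K$ and $\delta = \sqrt{\kappa_0}/2$.

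I would actually prove the stronger global statement
\[ \|\mathbf{D}_E\psi\|^2 \geq \left(1-\tfrac{\alpha}{4}\right)\|\nabla\psi\|^2 + \tfrac{\kappa_0}{4}\|\psi\|^2 - c\|\psi\|^2 \]
for all compactly supported $\psi$. This follows from the same computation together with $-\mathcal{R}^E \leq c\,\mathrm{Id}$. It is convenient for the Fredholm and deformation arguments, and it shows that the domain of $\mathbf{D}_E$ controls $H^1$. This in turn gives the local compactness needed in Anghel's theorem on $K$.

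I expect the main obstacle to be not the estimate but the bookkeeping showing that the Gromov--Lawson relative index argument goes through under coercivity at infinity in place of pointwise positive scalar curvature. In particular, the Fredholm property must hold along the homotopy from $E$ to a bundle trivial on a neighborhood of infinity, and the excision step must be justified. Here the plan is to invoke \cite{anghel:index}, and \cite[Theorem 6$\frac45$]{Gromov:index}, directly. They give invariance of the index and the relative index formula for any family of operators that are coercive at infinity with a uniform constant. Our estimate supplies that constant uniformly, because $\kappa_0$ and $\alpha$ do not depend on the twisting and all twisting data are supported in $K$.
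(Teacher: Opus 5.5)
Your coercivity estimate is correct and is the paper's Lemma~\ref{lem:estimate} in a leaner form. You prove $\|\mathbf{D}_E\psi\|^2 \geq (1-\frac{\alpha}{4})\|\nabla\psi\|^2 + \frac{\kappa_0}{4}\|\psi\|^2$ directly for compactly supported $\psi$ avoiding $K$, which is what Anghel's criterion requires. The paper instead uses a cutoff argument to get it for $W^{1,2}$ sections; both give $c_1 = \sqrt{\kappa_0}/2$.

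The gap is in passing from Fredholmness to the formula. The relative index theorem (equivalently, excision to a closed manifold) only computes a \emph{difference},
\[ \mathrm{ind}(\mathbf{D}_E^+) - \mathrm{ind}(\mathbf{D}_{\C^l}^+) = \int_M \widehat{\mathrm{ch}}(E)\wedge\hat{\mathbf{A}}(M). \]
You dispose of the second term by saying the trivial bundle contributes zero index ``since its integrand vanishes identically.'' That inference is invalid on a noncompact manifold, because the index of an operator that is merely coercive at infinity is not the integral of a local density. For example, remove a ball from a K3 surface and attach a round cylindrical end $S^3\times[0,\infty)$. The untwisted Dirac operator is coercive at infinity. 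Comparing with a positive scalar curvature metric on $\R^4$ with the same end shows its index is $\hat{\mathbf{A}}(\mathrm{K3}) = 2$, although $\widehat{\mathrm{ch}}(\C^l) = 0$. As written, the hypothesis \eqref{eqn:spec-scalar} enters your derivation of the formula only through coercivity at infinity; the global estimate you mention is used only for Fredholm and regularity bookkeeping. So the same argument would ``prove'' the formula on manifolds like this one, where it is false.

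The missing step is the vanishing of $\mathrm{ind}(\mathbf{D}_{\C^l}^+)$, and this is exactly where the hypothesis on all of $M$, not just near infinity, is needed. You already have the tool. For $E=\C^l$ with the flat connection, $\mathcal{R}^E = 0$, so your global estimate gives $\|\mathbf{D}_{\C^l}\psi\|^2 \geq \frac{\kappa_0}{4}\|\psi\|^2$ for all compactly supported $\psi$. Since $\mathbf{D}_{\C^l}$ is essentially self-adjoint on the complete manifold $M$, this extends to the domain of its closure. Hence $\ker\mathbf{D}_{\C^l} = 0$, so $\ker \mathbf{D}_{\C^l}^{\pm} = 0$ and $\mathrm{ind}(\mathbf{D}_{\C^l}^+) = 0$. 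This is Proposition~\ref{prop:zero-ker} (Bei--Pipoli), which takes the place of uniformly positive scalar curvature \emph{everywhere} in the Gromov--Lawson argument. With this step in place of the integrand argument, your proof agrees with the paper's.

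Separately, the ``homotopy from $E$ to a bundle trivial near infinity'' you worry about is not needed. $\mathbf{D}_E$ and $\mathbf{D}_{\C^l}$ already coincide outside $K$, so they are coercive at infinity with the same constant, and Anghel's relative index theorem applies to the pair directly.
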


To apply the relative index theorem of \cite[Corollary 3.15]{anghel:index}, it suffices by \cite[Theorem 2.1]{anghel:index} to prove the existence of a positive constant $c_1 > 0$ so that
\begin{equation}\label{eqn:coercive}
\|\mathbf{D}_E\psi\|_{L^2(M)} \geq c_1\|\psi\|_{L^2(M)} \ \ \forall\ \ \psi \in W^{1,2}(M, \mathbf{S}_\C \otimes E), \ \ \mathrm{spt}(\psi) \cap K = \emptyset.
\end{equation}

Recalling that $\mathcal{R}^E$ is supported in $K$, the following lemma implies \eqref{eqn:coercive} for $c_1 = \sqrt{\kappa_0}/2 > 0$. For similar computations, see \cite{Bei-Pipoli:spin, Hirsch-Kazaras-Khuri-Zhang:spec, Chai-Pyo-Wan:spec}.
\begin{lemma}\label{lem:estimate}
Suppose $M$ satisfies \eqref{eqn:spec-scalar} for some constants $\alpha > 0$ and $\kappa_0 > 0$ and a function $\kappa \geq \kappa_0$. If $\psi \in W^{1,2}(M, \mathbf{S}_\C \otimes E)$ with $\|\mathbf{D}_E\psi\|_{L^2(M)}^2 < \infty$, then
\[ \|\mathbf{D}_E\psi\|_{L^2(M)}^2 \geq \left(1-\frac{\alpha}{4}\right)\|\nabla \psi\|_{L^2(M)}^2 + \int_M\frac{\kappa}{4}|\psi|^2 + \int_M \langle \mathcal{R}^E(\psi), \psi\rangle. \]
\end{lemma}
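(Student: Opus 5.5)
The proof combines the twisted Lichnerowicz formula \eqref{eqn:lich} with the spectral inequality \eqref{eqn:spec-scalar}, tested against the function $\phi = |\psi|$, and uses Kato's inequality $|\nabla|\psi|| \leq |\nabla \psi|$ to trade the gradient term.

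First, assume $\psi$ is smooth and compactly supported. Integrating \eqref{eqn:lich} against $\psi$ and integrating by parts (the boundary terms vanish) gives
\[ \|\mathbf{D}_E\psi\|_{L^2}^2 = \|\nabla\psi\|_{L^2}^2 + \int_M \frac{R_M}{4}|\psi|^2 + \int_M \langle \mathcal{R}^E(\psi),\psi\rangle. \]
Next, apply \eqref{eqn:spec-scalar} to $\phi = |\psi|$. This function is only Lipschitz and compactly supported, but the inequality extends to such $\phi$ by mollification, since it only involves $\int|\nabla\phi|^2$ and $\int Q\phi^2$. This yields
\[ \int_M R_M|\psi|^2 \geq \int_M \kappa|\psi|^2 - \alpha\int_M |\nabla|\psi||^2 \geq \int_M \kappa|\psi|^2 - \alpha\|\nabla\psi\|_{L^2}^2, \]
where the second step is Kato's inequality, which holds a.e.\ for the metric connection on $\mathbf{S}_\C\otimes E$. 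Dividing by $4$ and substituting into the identity gives the claimed inequality with coefficient $1-\alpha/4$ on $\|\nabla\psi\|^2$.

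It remains to pass to general $\psi \in W^{1,2}$ with $\mathbf{D}_E\psi \in L^2$. Since $M$ is complete, compactly supported smooth sections are dense in $W^{1,2}$; take $\psi_j \to \psi$ in $W^{1,2}$. We have $\|\mathbf{D}_E\psi_j\| \leq C\|\nabla\psi_j\|$ pointwise via Clifford multiplication, so $\mathbf{D}_E\psi_j \to \mathbf{D}_E\psi$ in $L^2$. The $\mathcal{R}^E$ term is supported in the compact set $K$ with bounded coefficient, so it converges as well. For the term $\int \kappa|\psi_j|^2$, $\kappa$ may be unbounded, but $\kappa \geq \kappa_0 > 0$. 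Passing to an a.e.-convergent subsequence, Fatou's lemma gives $\int\kappa|\psi|^2 \leq \liminf \int \kappa|\psi_j|^2$, which is the right direction since this term sits on the smaller side of the inequality. Taking limits then proves the lemma; if the right side is $+\infty$, the same limit argument shows it is in fact finite.

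The main obstacle is this approximation step, specifically controlling the possibly unbounded $\kappa$ term, which Fatou handles. A secondary technical point is justifying that the test function $|\psi|$ is admissible in \eqref{eqn:spec-scalar}, which follows from the Lipschitz extension noted above.
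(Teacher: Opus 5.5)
Your proof is correct. Its core matches the paper's: the Lichnerowicz identity, Kato's inequality, and the spectral inequality tested against $|\psi|$, with the gradient term split as $(1-\frac{\alpha}{4})+\frac{\alpha}{4}$. Where you differ is in how you pass from compactly supported sections to a general $\psi\in W^{1,2}$.

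The paper works directly with the given $\psi$. It applies the identity to $\phi\psi$ for a cutoff $\phi$ and absorbs the cross terms $\langle(\nabla\phi)\cdot\psi,\phi\mathbf{D}_E\psi\rangle$ and $\langle(\nabla\phi)\otimes\psi,\phi\nabla\psi\rangle$ by AM--GM with a parameter $\sigma$. It then lets $\phi_i\to 1$ with $\|\nabla\phi_i\|_{L^\infty}\to 0$, and afterwards $\sigma\to 0$. Along the way it implicitly applies Lichnerowicz and the spectral inequality to compactly supported sections and functions that are only $W^{1,2}$.

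You instead prove the inequality exactly for $C_c^\infty$ sections. There the integration by parts is clean, and the test function $|\psi|$ is Lipschitz; you could even use the smooth compactly supported $\sqrt{|\psi|^2+\epsilon^2}-\epsilon$ to avoid mollifying. You then use density of $C_c^\infty$ in $W^{1,2}$ on a complete manifold. Every term converges except the $\kappa$-weighted one, and Fatou handles that term because it sits on the correct side of the inequality.

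Your route is shorter and avoids the $\sigma$-bookkeeping and the weak interpretation of the identity. The price is quoting the density theorem, whose proof is essentially the cutoff argument the paper carries out by hand. Note that the paper also needs a monotone-convergence/Fatou-type step for $\int\kappa\phi_i^2|\psi|^2$. Both arguments yield $\sqrt{\kappa}\,\psi\in L^2$ as a byproduct: the paper in its footnote, you in your finiteness remark. Your observation that $\mathbf{D}_E\psi\in L^2$ is automatic for $\psi\in W^{1,2}$, via $|\mathbf{D}_E\psi|\leq\sqrt{n}\,|\nabla\psi|$, is also correct.
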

\begin{proof}
Let $\phi$ be a smooth compactly supported function on $M$. For any $\sigma > 0$, we use AM--GM to conclude
\begin{align}\label{eqn:am-gm1}
\|\mathbf{D}_E(\phi\psi)\|_{L^2(M)}^2
& = \int_M \phi^2|\mathbf{D}_E\psi|^2 + 2\int_M \langle (\nabla \phi) \cdot \psi, \phi \mathbf{D}_E\psi\rangle + \int_M |\nabla \phi|^2|\psi|^2\\
& \leq (1+\sigma)\int_M \phi^2|\mathbf{D}_E\psi|^2 + (1+\sigma^{-1})\int_M |\nabla \phi|^2|\psi|^2.\notag
\end{align}
By \eqref{eqn:lich} and integration by parts, we have
\begin{align}\label{eqn:int-by-parts}
\|\mathbf{D}_E(\phi\psi)\|_{L^2(M)}^2
& = \int_M \langle \mathbf{D}_E^2(\phi\psi), \phi\psi\rangle\\
& = \int_M |\nabla (\phi \psi)|^2 + \frac{1}{4}\int_M R_M\phi^2|\psi|^2 + \int_M \phi^2\langle \mathcal{R}^E(\psi), \psi\rangle.\notag
\end{align}
By the Kato inequality and \eqref{eqn:spec-scalar}, we have
\begin{equation}\label{eqn:apply-spec-scalar} \int_M |\nabla (\phi\psi)|^2 \geq \int_M |\nabla (|\phi| |\psi|)|^2 \geq \int_M \alpha^{-1}(\kappa - R_M)\phi^2|\psi|^2. \end{equation}
Writing $|\nabla(\phi\psi)|^2$ as $(1 - \frac{\alpha}{4})|\nabla(\phi\psi)|^2 + \frac{\alpha}{4}|\nabla(\phi\psi)|^2$ in the last line of \eqref{eqn:int-by-parts} and applying \eqref{eqn:apply-spec-scalar}, we find
\begin{equation}\label{eqn:pre-am-gm}
\|\mathbf{D}_E(\phi\psi)\|_{L^2(M)}^2
\geq \left(1 - \frac{\alpha}{4}\right)\int_M |\nabla (\phi\psi)|^2 + \int_M \frac{\kappa}{4}\phi^2|\psi|^2 + \int_M \phi^2\langle \mathcal{R}^E(\psi), \psi\rangle.
 \end{equation}
We again use AM--GM to conclude
 \begin{align}\label{eqn:am-gm2}
 |\nabla(\phi \psi)|^2
 & = \phi^2|\nabla \psi|^2 + 2\langle (\nabla \phi) \otimes \psi, \phi \nabla \psi\rangle + |\nabla \phi|^2|\psi|^2\\
 & \geq (1-\sigma)\phi^2|\nabla \psi|^2 - (\sigma^{-1}-1)|\nabla \phi|^2|\psi|^2.\notag
 \end{align}
 Altogether, by \eqref{eqn:am-gm1}, \eqref{eqn:pre-am-gm}, and \eqref{eqn:am-gm2}, we find a constant $c(\sigma, \alpha) > 0$ so that
 \begin{align}\label{eqn:pre-phi-limit}
 c(\sigma, \alpha)\int_M |\nabla \phi|^2|\psi|^2 + (1+\sigma)\int_M\phi^2|\mathbf{D}_E\psi|^2
 & \geq \left(1 - \frac{\alpha}{4}\right)(1-\sigma)\int_M \phi^2|\nabla \psi|^2\notag\\
 & \hspace{0.5cm}  + \int_M \frac{\kappa}{4}\phi^2|\psi|^2 + \int_M \phi^2\langle \mathcal{R}^E(\psi), \psi\rangle.
 \end{align}

Now, let $\phi_i$ be a sequence of compactly supported smooth functions satisfying
\begin{itemize}
	\item $\phi_i$ converges locally uniformly to 1, and
	\item $\lim_{i \to \infty} \|\nabla \phi_i\|_{L^{\infty}(M)} = 0$.
\end{itemize}
For instance, we can take $\phi_i$ to be a smoothing of a radial linear cutoff on the annulus $B_{2i}(x_0) \setminus B_i(x_0)$. Taking the limit of \eqref{eqn:pre-phi-limit} as $i \to \infty$ yields\footnote{We note that it follows from \eqref{eqn:pre-phi-limit} along this sequence that $\sqrt{\kappa}\psi$ is in $L^2$, where we have used the fact that $\psi$ and $\nabla \psi$ are in $L^2$ and that $\mathcal{R}^E$ is continuous and compactly supported.}
\begin{equation}\label{eqn:pre-sigma-limit}
(1+\sigma)\|\mathbf{D}_E\psi\|_{L^2(M)}^2
\geq \left(1 - \frac{\alpha}{4}\right)(1-\sigma)\|\nabla \psi\|_{L^2(M)}^2 + \int_M\frac{\kappa}{4}|\psi|^2 + \int_M \langle \mathcal{R}^E(\psi), \psi\rangle.
\end{equation}
The desired inequality follows from taking the limit of \eqref{eqn:pre-sigma-limit} as $\sigma \to 0$.
\end{proof}

We emphasize that, in addition to enabling the index theory of \cite{anghel:index}, Lemma \ref{lem:estimate} also implies that $\mathbf{D}_{\C^l}$ has zero kernel (i.e.\ when the bundle $E$ is globally trivial). This result is proved by \cite{Bei-Pipoli:spin}.

\begin{proposition}[\cite{Bei-Pipoli:spin}]\label{prop:zero-ker}
Suppose $M$ satisfies \eqref{eqn:spec-scalar} for some constants $0 < \alpha < 4$ and $\kappa_0 > 0$ and a function $\kappa \geq \kappa_0$. Then the operator $\mathbf{D}_{\C^l}$ acting on $L^2$ sections of $\mathbf{S}_\C\otimes \C^l$ has zero kernel.
\end{proposition}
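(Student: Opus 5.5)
The plan is to apply Lemma \ref{lem:estimate} with $E = \C^l$ carrying the trivial flat connection, so that $\mathcal{R}^E \equiv 0$ everywhere. Take $\psi$ to be an $L^2$ section of $\mathbf{S}_\C \otimes \C^l$ with $\mathbf{D}_{\C^l}\psi = 0$. We want to conclude $\psi = 0$.

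The first step is regularity: we need $\psi \in W^{1,2}(M, \mathbf{S}_\C\otimes\C^l)$ so that the lemma applies. Since $\mathbf{D}_{\C^l}$ is elliptic, $\psi$ is smooth, and $\|\mathbf{D}_{\C^l}\psi\|_{L^2} = 0 < \infty$ trivially. To show $\nabla\psi \in L^2$, run the cutoff argument from the proof of Lemma \ref{lem:estimate} directly on $\phi_i\psi$, using only that $\psi \in L^2$. By \eqref{eqn:int-by-parts}, \eqref{eqn:pre-am-gm}, \eqref{eqn:am-gm1} and \eqref{eqn:am-gm2} with $\mathcal{R}^E = 0$ and $\mathbf{D}\psi = 0$, we get
\[
\left(1-\tfrac{\alpha}{4}\right)(1-\sigma)\int_M \phi_i^2|\nabla\psi|^2 + \int_M \tfrac{\kappa}{4}\phi_i^2|\psi|^2 \leq c(\sigma,\alpha)\int_M |\nabla\phi_i|^2|\psi|^2 .
\]
The right-hand side is bounded by $c\,\|\nabla\phi_i\|_{L^\infty}^2\|\psi\|_{L^2}^2$, which tends to $0$.

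This already finishes the argument without a separate appeal to the lemma. Because $\alpha < 4$ and $\kappa \geq \kappa_0 > 0$, both terms on the left are nonnegative. Letting $i \to \infty$ and using Fatou's lemma gives
\[
\frac{\kappa_0}{4}\|\psi\|_{L^2}^2 \leq 0,
\]
hence $\psi = 0$. Equivalently, once $\psi \in W^{1,2}$ is known, Lemma \ref{lem:estimate} gives $0 \geq \tfrac{\kappa_0}{4}\|\psi\|^2$ directly.

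The main subtlety is exactly this regularity point. The lemma as stated assumes $\psi \in W^{1,2}$, whereas kernel elements are a priori only in $L^2$. Redoing the cutoff computation, rather than quoting the lemma verbatim, resolves it. Two facts justify this. First, the pointwise identities used in that computation hold for smooth $\psi$ with $\phi_i$ compactly supported, so no integrability of $\nabla\psi$ is needed. Second, the Kato step \eqref{eqn:apply-spec-scalar} applies to the compactly supported Lipschitz function $|\phi_i||\psi|$ by density.
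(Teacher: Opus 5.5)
Your proof is correct and is essentially the paper's argument. The paper applies Lemma~\ref{lem:estimate} with the flat trivial connection ($\mathcal{R}^E = 0$) and $\mathbf{D}_{\C^l}\psi = 0$ to get $\frac{\kappa_0}{4}\|\psi\|_{L^2(M)}^2 \leq 0$. Your rerun of the cutoff computation, which avoids assuming $\psi \in W^{1,2}$ a priori, is a valid refinement of a point the paper leaves implicit: the lemma's proof only needs $\psi$ smooth with $\psi, \mathbf{D}_E\psi \in L^2$, and it then yields $\nabla\psi \in L^2$.
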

\begin{proof}
Suppose $\psi$ is in the kernel of $\mathbf{D}_{\C^l}$. Since the connection on the trivial bundle is flat, Lemma \ref{lem:estimate} implies
\[ \frac{\kappa_0}{4}\|\psi\|_{L^2(M)}^2 \leq \left(1 - \frac{\alpha}{4}\right)\|\nabla \psi\|_{L^2(M)}^2 + \int_M\frac{\kappa}{4}|\psi|^2 \leq 0, \]
so we have $\psi = 0$.
\end{proof}

\subsection{Obstruction to spectral scalar curvature lower bounds}
We now use Theorem \ref{thm:index} to establish an obstruction to positive spectral scalar curvature (see Theorem \ref{thm:main-obstruction}).

\begin{theorem}\label{thm:obstruction}
There are constants $C_n > 0$ so that the following holds. Suppose $(M^n, g)$ is a complete spin manifold of dimension $n \geq 2$ satisfying
\begin{equation}\label{eqn:spec-obst} -\alpha \triangle_M + R_M - h \geq 0, \end{equation}
for $0 < \alpha < 4$, where $h : M \to \R$ is a smooth positive function. Then there is no Lipschitz map $f: M \to \mathbb{S}^n$ satisfying
\begin{itemize}
	\item $f$ is has nonzero degree,
	\item $f$ is constant outside a compact subset of $M$, and
	\item $f$ is $C_n h$--contracting on 2--forms.
\end{itemize}
\end{theorem}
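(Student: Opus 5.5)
We argue by contradiction, following the Gromov--Lawson argument \cite[Theorem 6.12]{Gromov-Lawson:spin}, with Theorem \ref{thm:index} and Lemma \ref{lem:estimate} replacing the pointwise positive scalar curvature index theory. Suppose such an $f$ exists.

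\emph{Reduction to even dimension.} First reduce to $n$ even. If $n$ is odd, replace $M$ by $M \times \mathbb{S}^1(\ell)$ with $\ell$ large. The spectral condition \eqref{eqn:spec-obst} persists, since $R$ is unchanged and one can restrict to test functions, or use Fubini. Replace $f$ by the composition of $f\times(\text{degree one map } \mathbb{S}^1(\ell)\to \mathbb{S}^1)$ with a degree one smash map $\mathbb{S}^n\times \mathbb{S}^1 \to \mathbb{S}^{n+1}$. The $\mathbb{S}^1$ factor's Lipschitz constant $\sim 1/\ell$ is only multiplied against $|df|\lesssim$ something bounded on the compact support. The smash map sends everything outside the product of a compact set of $M$ with $\mathbb{S}^1$ to a point, so compact support is preserved. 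However, 2-form contraction of the product mixes $|df|\cdot \ell^{-1}$ terms that are not controlled by $h$. To handle this, I would use that $h\geq h_0>0$ on the compact support $K$ of $df$, and choose $\ell$ so that $\ell^{-1}\sup_K|df| \le h_0$. I expect this to work, at the cost of adjusting $C_{n+1}$ by a dimensional factor.

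\emph{Main argument for $n=2k$ even.} Let $E_0\to \mathbb{S}^n$ be a bundle with $\int_{\mathbb{S}^n} \mathrm{ch}_k(E_0)\neq 0$, e.g.\ the positive spinor bundle, with its canonical connection. Set $E = f^*E_0$. Since $f$ is constant outside a compact set, $E$ with the pulled-back connection has compact support in the required sense; smoothing $f$ keeps the degree and the contraction up to a factor. Since $\widehat{\mathrm{ch}}(E)=f^*\widehat{\mathrm{ch}}(E_0)$ and $\mathbb{S}^n$ has only top and zero cohomology, we get $\int_M \widehat{\mathrm{ch}}(E)\wedge\hat{\mathbf A}(M)=\deg(f)\int_{\mathbb{S}^n}\mathrm{ch}_k(E_0)\neq 0$.

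To invoke Theorem \ref{thm:index} we need a constant $\kappa_0>0$, but $h$ need not be bounded below. So take $\kappa = \tfrac12 h + \epsilon$? That fails \eqref{eqn:spec-scalar} unless $h\geq 2\epsilon$. Instead, I would apply the theorem to $M\times\R$ or $M\times \mathbb{S}^1$... The cleaner route: Anghel's criterion only needs coercivity \eqref{eqn:coercive} outside a compact set. Here I would choose $K$ large and note that on $M\setminus K$ the proof of Lemma \ref{lem:estimate} gives $\|\mathbf D_E\psi\|^2 \ge \int \frac{h}{4}|\psi|^2$. Obtaining a uniform constant seems to require $h\ge\kappa_0$. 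I will assume this is handled, either by rescaling or by noting that the intended application ($h=1/(4\mathbf r^2)$) is handled separately. Then the index is nonzero, which gives a nonzero $L^2$ harmonic spinor $\psi$ with $\mathbf D_E\psi=0$.

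\emph{Curvature estimate and contradiction.} Lemma \ref{lem:estimate} then gives
\[0\ \ge\ \int_M \tfrac{h}{4}|\psi|^2+\langle\mathcal R^E\psi,\psi\rangle.\]
The curvature term is $\mathcal R^E=\tfrac12\sum_{i,j} e_ie_j\otimes R^E_{e_i,e_j}$ and $R^E = f^*R^{E_0}$, so $|\mathcal R^E|\le c_n' \,\|\Lambda^2 df\|\le c_n' C_n h$ pointwise. Choosing $C_n< 1/(4c_n')$ gives $\int (\tfrac h4 - c_n'C_n h)|\psi|^2>0$ unless $\psi=0$, a contradiction. The main obstacle is the lack of a uniform lower bound on $h$ needed for the index theorem (and the dimension-raising step); I would attempt to resolve it by applying the whole argument to the product $M\times\R$ with a cutoff modification, or by exploiting that coercivity is only needed off the compact set $K$.
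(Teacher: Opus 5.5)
Your final step matches the paper: you pull back a bundle with nonzero top Chern character, obtain a nonzero $L^2$ element of $\ker\mathbf{D}_E$ from the index, then kill it with Lemma \ref{lem:estimate} using $|\mathcal{R}^E|\lesssim\|\Lambda^2 df\|\le C_n h$. Your odd-to-even reduction via $M\times\mathbb{S}^1(\ell)$ is also fine. But the point you explicitly leave open is the heart of the proof, and as written it is a genuine gap.

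Theorem \ref{thm:index} rests on Anghel's criterion \eqref{eqn:coercive}. That criterion needs a \emph{uniform} constant $c_1>0$ for sections supported off a compact set. If $h\to 0$ at infinity (exactly the case $h=1/(4\mathbf{r}^2)$ used later), $\mathbf{D}_E$ need not even be Fredholm. For instance, flat $\R^n$ with $n\ge 3$ satisfies \eqref{eqn:spec-obst} with $h=c(1+|x|^2)^{-1}$ by Hardy's inequality, yet $0$ lies in the essential spectrum of its Dirac operator. So there is no index to compute and no harmonic spinor to produce.

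Your proposed remedies do not help:
\begin{itemize}
\item $\R$ and $\mathbb{S}^1$ factors are flat and add no scalar curvature.
\item Rescaling cannot change $\inf h=0$.
\item Coercivity ``only off $K$'' still requires a uniform constant there.
\item Deferring to the application does not prove the theorem as stated.
\end{itemize}

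The missing idea is the Gromov--Lawson device of crossing with a large round $2$-sphere, which your $\mathbb{S}^1$ step almost contains. Let $h_0=\min_K h$, and take $\tilde M=M\times S^2$ with metric $g\oplus\frac{2}{\kappa_0}g_{\mathbb{S}^2}$, so that $R_{\tilde M}=\tilde R_M+\kappa_0$. Take the positive Fischer-Colbrie--Schoen solution of $-\alpha\triangle_M u+(R_M-h)u=0$ and extend it to be $S^2$-invariant. This gives $-\alpha\triangle_{\tilde M}+R_{\tilde M}-(\kappa_0+\tilde h)\ge 0$, which is \eqref{eqn:spec-scalar} with $\kappa=\kappa_0+\tilde h\ge\kappa_0$.

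Next set $\tilde f=s_n\circ(f\times\text{homothety})$, where $s_n:\mathbb{S}^n\times\mathbb{S}^2\to\mathbb{S}^{n+2}$ is a degree one smashing map constant on $\{p_0\}\times\mathbb{S}^2$. Outside $K\times S^2$ this map is constant, so the non-decaying sphere directions only contribute over $K$. There their $2$-form terms, of order $\kappa_0$ and $\mathrm{Lip}(f)\sqrt{\kappa_0}$, are absorbed into $C_n h_0$ once $\kappa_0$ is small. This is the same absorption you performed for the mixed $\ell^{-1}|df|$ terms.

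Now $\tilde f$ is $\tilde C_n\kappa$-contracting for a dimensional constant $\tilde C_n$, and Theorem \ref{thm:index} applies on $\tilde M$. For odd $n$, combine this with your $\mathbb{S}^1$ step, or use an $\mathbb{S}^3$ factor as the paper does. Your vanishing argument, with $\kappa$ in place of $h$, then yields the contradiction.
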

\begin{proof}
Suppose otherwise for contradiction, so such a map $f$ exists. We carry out the proof when the dimension of $M$ is even (in the odd case, the same argument works by replacing $\mathbb{S}^2$ with $\mathbb{S}^3$).

We note that the assumptions here do not ensure \emph{uniformly} positive spectral scalar curvature, which is required to apply the index theory from Theorem \ref{thm:index}. As in \cite[Proof of Theorem 6.12]{Gromov-Lawson:spin}, we can apply the index theory after first taking a product with a large 2--sphere. Enlarging the compact set $K$ (outside of which $E$ is trivial) if necessary, we suppose that $f$ is constant outside $K$. Let $h_0 = \min_{K}h > 0$. Define the constant
\[ \kappa_0 \coloneq 2\min\left\{\frac{C_nh_0}{\mathrm{Lip}\ f},\ \sqrt{C_nh_0} \right\} > 0. \]
Consider the $(n+2)$-dimensional spin manifold
\[ \tilde{M} \coloneq \left(M \times S^2,\ g \oplus \frac{2}{\kappa_0} g_{\mathbb{S}^2}\right). \]
Then $\tilde{M}$ satisfies
\begin{equation}\label{eqn:new-spec} -\alpha \triangle_{\tilde{M}} + R_{\tilde{M}} - (\kappa_0 + \tilde{h}) \geq 0, \end{equation}
where $\tilde{h}$ is the $S^2$--invariant extension of $h$ to $\tilde{M}$. Indeed, by \cite[Theorem 1]{Fischer-Colbrie-Schoen:stable}, there is a smooth positive function $u : M \to \R_{>0}$ satisfying
\[ -\alpha \triangle_M u + (R_M - h)u = 0. \]
If we let $\tilde{u}$ denote the $S^2$--invariant extension of $u$ to $\tilde{M}$, and $\tilde{R}_M$ ($= R_{\tilde{M}} - \kappa_0)$ the $S^2$--invariant extension of the scalar curvature function of $M$ to $\tilde{M}$, then we have
\[ 0 = -\alpha \triangle_{\tilde{M}} \tilde{u} + (\tilde{R}_M - \tilde{h})\tilde{u} = -\alpha \triangle_{\tilde{M}} \tilde{u} + (R_{\tilde{M}} - \kappa_0 - \tilde{h})\tilde{u}. \]
By \cite[Theorem 1]{Fischer-Colbrie-Schoen:stable}, \eqref{eqn:new-spec} holds. Let $\kappa \coloneq \kappa_0 + \tilde{h} \geq \kappa_0$. In particular, the assumptions of Theorem \ref{thm:index} are satisfied for $\tilde{M}$.

By the choice of the constant $\kappa_0$, the map $f \times \frac{\kappa_0}{2} : \tilde{M} \to \mathbb{S}^n \times \mathbb{S}^2$ is Lipschitz, has nonzero degree, and is $C_n\kappa$--contracting on 2--forms. Furthermore, $f \times \frac{\kappa_0}{2}$ maps the complement of a compact subset of $\tilde{M}$ into $\{p_0\} \times \mathbb{S}^2 \subset \mathbb{S}^n \times \mathbb{S}^2$. Let $s_n : \mathbb{S}^n \times \mathbb{S}^2 \to \mathbb{S}^{n+2}$ be a fixed Lipschitz degree one map that is constant on $\{p_0\} \times \mathbb{S}^2$ (this is a ``smashing map,'' see \cite[Proof of Theorem 6.12]{Gromov-Lawson:spin}). Then
\[ \tilde{f} \coloneq s_n \circ \left(f \times \frac{\kappa_0}{2}\right) : \tilde{M} \to \mathbb{S}^{n+2} \]
is Lipschitz, has nonzero degree, is constant outside a compact set, and is $(\mathrm{Lip}\ s_n)^2C_n\kappa$--contracting on 2--forms. Let $\tilde{C}_n \coloneq (\mathrm{Lip}\ s_n)^2C_n$.

For ease of notation, let $m \coloneq n + 2$. Let $E_0 \to \mathbb{S}^m$ be a complex vector bundle with nonzero top Chern class $c_{m/2}(E_0) \neq 0$ (such a bundle exists, see \cite[Proof of Theorem 5.5]{Lawson-Michelson:spin} and \cite{Atiyah-Hirzebruch}). Equip $E_0$ with a fixed Hermitian metric and a fixed metric connection. Using the map $\tilde{f}$, we let $E \coloneq \tilde{f}^*E_0$. Since $\tilde{f}$ is constant outside a compact set, $E$ is compactly supported.

We compute the index of $\mathbf{D}_E^+$ as constructed above using Theorem \ref{thm:index}. Since $\tilde{f}$ has nonzero degree and the Chern classes are functorial, we have
\[ c_j(E) = \begin{cases}
1 & j = 0\\
0 & 1 \leq j \leq m/2-1\\
\omega \neq 0 & j = m/2.
\end{cases} \]
Hence, we compute the reduced Chern character to be
\[ \widehat{\mathrm{ch}}(E) = (-1)^{m/2-1}\frac{1}{(m/2-1)!}\omega \neq 0. \]
Since $\omega$ has top degree, the right hand side of Theorem \ref{thm:index} is
\[ \int_{\tilde{M}} \widehat{\mathrm{ch}}(E) \wedge \hat{\mathbf{A}}(\tilde{M}) = (-1)^{m/2-1}\frac{1}{(m/2-1)!}\int_{\tilde{M}} \omega \neq 0. \]
Then by Theorem \ref{thm:index}, $\mathrm{ker}(\mathbf{D}_E) \neq 0$.

By the contracting property of $\tilde{f}$, and the fact that $E_0$ is a fixed bundle in each dimension, there is a fixed constant $K_n$ depending only on $n$ so that we have
\[ -\mathcal{R}_E \leq K_n\tilde{C}_n\kappa\ \mathrm{Id}. \]
If we take $\tilde{C}_n$ to be $\frac{1}{8K_n}$ (i.e.\ take $C_n$ to be $\frac{1}{8K_n(\mathrm{Lip}\ s_n)^2}$), then we have
\[ -\mathcal{R}_E \leq \frac{1}{8}\kappa\ \mathrm{Id}. \]
Suppose $\psi$ is an $L^2$ section of $\mathbf{S}_\C \otimes E$ in the kernel of $\mathbf{D}_E$. By Lemma \ref{lem:estimate}, we have
\[ \int_M \frac{\kappa}{4}|\psi|^2 \leq \int_M \frac{\kappa}{8}|\psi|^2. \]
Since $\kappa \geq \kappa_0 > 0$, we have $\psi = 0$. This conclusion contradicts the fact that $\mathrm{ker}(\mathbf{D}_E) \neq 0$.
\end{proof}

\begin{remark}
The range of permissible values of the coefficient $\alpha$ can actually be slightly increased using the improved Kato inequality (see \cite[Lemma 3.1]{Feehan:Kato}). In this situation we don't need to make essential use of this extra room, unlike in the case of harmonic 1--forms, because the Lichnerowicz formula for spinors has a good factor of $\frac{1}{4}$ in front of the scalar curvature term.
\end{remark}

\section{Stable minimal surfaces}
We apply the Dirac index theory developed in the previous section to give a new proof of the classification of complete two--sided stable minimal surfaces in $\R^3$.

\subsection{Spectral scalar curvature of stable minimal surfaces}\label{sec:hardy}
We prove a positive--but--decaying lower bound for the spectral scalar curvature of a two--sided stable minimal surface in $\R^3$, taking advantage of the Hardy inequality for minimal hypersurfaces (see \cite{Cabre-Miraglio:hardy}). Since the Hardy inequality is only effective for minimal hypersurfaces in dimension 4 and higher, we take a product with $\R$.
\begin{lemma}\label{lem:stable-spec}
Let $\Sigma \to \R^3$ be a two--sided stable minimal immersion, and let $g$ denote the pullback metric on $\Sigma$. Consider the product manifold
\[ (M, \tilde{g}) \coloneq (\Sigma \times \R, g \oplus dt^2). \]
Let $\mathbf{r}$ be the restriction of the Euclidean distance in $\R^4$ from $\vec{0}$ to the $\R$--invariant immersion $M \to \R^4$. For any $\beta > 0$, we have
\begin{equation}\label{eqn:stable-spec}
-(1 + \beta)\triangle_{M} + R_M - \frac{\beta}{4\mathbf{r}^2} \geq 0.
\end{equation}
\end{lemma}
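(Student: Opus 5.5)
The plan is to combine two quadratic-form inequalities on $M = \Sigma \times \R$. The first is the stability inequality in scalar-curvature form \eqref{eqn:nonneg-spec}. The second is the Hardy inequality \eqref{eqn:intro-hardy} for the $3$-dimensional minimal hypersurface $M \to \R^4$, scaled by $\beta$.

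\textbf{Stability of $M$.} Since $\Sigma \to \R^3$ is two-sided stable and minimal, the $\R$-invariant immersion $M = \Sigma \times \R \to \R^3 \times \R = \R^4$ is a two-sided minimal immersion, with unit normal $(\nu, 0)$ and second fundamental form $A_M = A_\Sigma \oplus 0$. Its stability is the standard product fact, which I would verify as follows. Take $\phi \in C_c^\infty(M)$ and write $\phi_t = \phi(\cdot, t)$. Stability of $\Sigma$ applied slice by slice gives $\int_\Sigma |\nabla_\Sigma \phi_t|^2 \geq \int_\Sigma |A_\Sigma|^2\phi_t^2$. Integrating in $t$, and adding the nonnegative term $\int (\partial_t\phi)^2$, yields $\int_M |\nabla \phi|^2 \geq \int_M |A_M|^2\phi^2$. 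The Gauss equation for a minimal hypersurface gives $R_M = -|A_M|^2$. Hence $\int_M |\nabla\phi|^2 + R_M\phi^2 \geq 0$, which is \eqref{eqn:nonneg-spec} on $M$.

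\textbf{Hardy inequality.} Next I would invoke the Hardy inequality \eqref{eqn:intro-hardy} of \cite{Cabre-Miraglio:hardy} for the minimal immersion $M^3 \to \R^4$, where $n = 3$ so that $(n-2)^2/4 = 1/4$. This gives $\int_M |\nabla\phi|^2 - \frac{1}{4\mathbf{r}^2}\phi^2 \geq 0$ for all $\phi \in C_c^\infty(M)$. Two points need checking. The first is that the inequality holds for immersed minimal hypersurfaces, not just embedded ones. The second is that $1/\mathbf{r}^2$ is locally integrable on $M$ even where $M$ passes through the origin. On a $3$-dimensional submanifold, $\mathbf{r}^{-2}$ is integrable near the origin, and the Hardy inequality itself is established for all test functions, so this causes no trouble. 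The Hardy inequality is derived from the divergence identity $\mathrm{div}_M(x) = n$ for minimal submanifolds, which is local and works for immersions.

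\textbf{Combining.} Multiply the Hardy inequality by $\beta$ and add it to the stability inequality:
\[ \int_M (1+\beta)|\nabla\phi|^2 + \left(R_M - \frac{\beta}{4\mathbf{r}^2}\right)\phi^2 \geq 0, \]
which is exactly \eqref{eqn:stable-spec}.

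\textbf{Main obstacle.} The only nontrivial step is confirming that the cited Hardy inequality applies in this generality, namely to immersed, possibly noncompact, complete minimal hypersurfaces, with the sharp constant $1/4$ in dimension $3$. If only embedded versions were available, I would rederive it directly. For $\phi$ compactly supported, integrate $\mathrm{div}_M(\phi^2 \mathbf{r}^{-2}\, x^T)$, using $\mathrm{div}_M x^T = 3$ and $|\nabla \mathbf{r}| \leq 1$. This gives
\[ \int_M \frac{\phi^2}{\mathbf{r}^2} \leq 2\int_M \frac{|\phi||\nabla\phi|}{\mathbf{r}}, \]
and Cauchy--Schwarz then yields $\int_M \phi^2/\mathbf{r}^2 \leq 4\int_M |\nabla\phi|^2$. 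The singularity at $\mathbf{r} = 0$ would be handled by a standard regularization, replacing $\mathbf{r}^2$ by $\mathbf{r}^2 + \eps^2$ and letting $\eps \to 0$.
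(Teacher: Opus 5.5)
Your proof is correct and has the same structure as the paper's: show that $\Sigma\times\R\to\R^4$ is a two-sided stable minimal immersion, use the Gauss equation $R_M=-|A_M|^2$ to get $-\triangle_M+R_M\geq 0$, invoke the Cabr\'e--Miraglio Hardy inequality with $n=3$, and add $\beta$ times it. The one difference is how you verify stability of the product: you integrate the stability inequality of $\Sigma$ over the slices $\Sigma\times\{t\}$ and use $\int(\partial_t\phi)^2\geq 0$, which is more elementary than the paper's route through the $\R$-invariant extension of a positive Jacobi field and \cite[Theorem 1]{Fischer-Colbrie-Schoen:stable}; your backup derivation of the Hardy inequality via $\mathrm{div}_M\bigl(\phi^2(\mathbf{r}^2+\eps^2)^{-1}x^T\bigr)$ is also sound.
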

\begin{proof}
We note that $\tilde{g}$ is the pullback metric on $M$ for the $\R$--invariant immersion $M \to \R^4$.
Moreover, the immersion $M \to \R^4$ is also a two--sided stable minimal immersion. Indeed, two--sided stability is equivalent to the inequality
\[ -\triangle_{\Sigma} - |A_{\Sigma}|^2 \geq 0. \]
By \cite[Theorem 1]{Fischer-Colbrie-Schoen:stable}, there is a positive function $u : \Sigma \to \R_{>0}$ so that
\[ -\triangle_{\Sigma}u - |A_{\Sigma}|^2u = 0. \]
Let $\tilde{u}$ denote the $\R$--invariant extension of $u$ to $\Sigma \times \R$. Since $|A_M|^2 = |A_{\Sigma}|^2$, the positive function $\tilde{u}$ satisfies
\[ -\triangle_{M}\tilde{u} - |A_{M}|^2\tilde{u} = 0. \]
Then by \cite[Theorem 1]{Fischer-Colbrie-Schoen:stable}, $M$ is stable.

By the Gauss equation and the minimality of $M$, we know that $M$ has nonnegative spectral scalar curvature:
\begin{equation}\label{eqn:spec-nonneg} -\triangle_M + R_M \geq 0. \end{equation}
By the Hardy inequality for minimal hypersurfaces in $\R^4$ (see \cite[(1.5)]{Cabre-Miraglio:hardy}), we have
\begin{equation}\label{eqn:hardy} -\triangle_M - \frac{1}{4\mathbf{r}^2} \geq 0. \end{equation}
Adding \eqref{eqn:spec-nonneg} to $\beta$ times \eqref{eqn:hardy} gives \eqref{eqn:stable-spec}.
\end{proof}

\subsection{Construction of contracting map}\label{sec:contracting}
Here we aim to construct a degree one map from $\Sigma \times \R$ to $\mathbb{S}^3$ that is sufficiently contracting on 2--forms.

We begin with two preliminary propositions about Cartan--Hadamard manifolds. These results are standard in Riemannian geometry, but we include the details of the precise formulations required for our construction.

The first preliminary proposition relates to the gradient of the radial projection maps.

\begin{proposition}\label{prop:cartan-hadamard1}
Let $(M^n, g)$ be a complete simply connected manifold with nonpositive sectional curvature, and fix $x_0 \in M$. Then $\exp_{x_0} : \R^n \to M$ is a diffeomorphism. For $0 < \rho < R$, the radial projection map
\[ \pi_{R, \rho} : \partial B_R(x_0) \to \partial B_\rho(x_0),\ \ \pi_{R, \rho}(x) \coloneq \exp_{x_0}\left(\frac{\rho}{R}\exp_{x_0}^{-1}(x)\right) \]
is smooth, depends smoothly on $R$ and $\rho$, and satisfies
\[ |\nabla \pi_{R,\rho}| \leq \frac{\rho}{R}. \]
\end{proposition}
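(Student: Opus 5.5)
The first assertion is the Cartan--Hadamard theorem. Since $M$ has nonpositive sectional curvature, there are no conjugate points, so $\exp_{x_0}$ is a local diffeomorphism. Completeness together with the Gauss lemma makes $\exp_{x_0}$ a covering map, and simple connectivity then makes it a diffeomorphism. In particular $r = d(x_0,\cdot)$ is smooth away from $x_0$, and each $\partial B_R(x_0)$ is the smooth embedded sphere $\exp_{x_0}(\{|v| = R\})$. In these polar coordinates,
\[ \pi_{R,\rho} = \exp_{x_0}\circ m_{\rho/R}\circ \exp_{x_0}^{-1}\big|_{\partial B_R(x_0)}, \]
where $m_\lambda$ is scalar multiplication by $\lambda$. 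This formula shows that $\pi_{R,\rho}$ is a composition of smooth maps depending smoothly on $(R,\rho)$.

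For the gradient bound, I would express the differential through Jacobi fields. Fix $x = \exp_{x_0}(R\theta)$ with $|\theta| = 1$, and let $\gamma(t) = \exp_{x_0}(t\theta)$. A tangent vector to $\partial B_R(x_0)$ at $x$ is $w = d(\exp_{x_0})_{R\theta}(R\eta)$ for some $\eta \perp \theta$. Consider the Jacobi field $J(t) = d(\exp_{x_0})_{t\theta}(t\eta)$ along $\gamma$, which satisfies $J(0) = 0$ and $J'(0) = \eta$. Then $w = J(R)$ and $d\pi_{R,\rho}(w) = J(\rho)$. It therefore suffices to show
\[ |J(\rho)| \leq \frac{\rho}{R}\,|J(R)| \qquad \text{for } 0 < \rho < R, \]
that is, $t \mapsto |J(t)|/t$ is nondecreasing on $(0,\infty)$.

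This monotonicity is the main step. Set $f = |J|$, which is positive for $t > 0$ because there are no conjugate points. Using $J'' = -\mathcal{R}(J,\dot\gamma)\dot\gamma$ and $K \leq 0$, we get
\[ (f^2)'' = 2|J'|^2 + 2\langle J'',J\rangle \geq 2|J'|^2. \]
Combining this with Cauchy--Schwarz, $|J'| \geq f'$, yields $f f'' \geq 0$, so $f$ is convex on $(0,\infty)$. Since $f(0) = 0$, convexity gives $f(\rho) \leq \frac{\rho}{R}f(R)$, which is exactly the needed bound.

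A little care is needed because $f$ is only smooth where $J \neq 0$; this holds on $(0,\infty)$, and $f$ is continuous at $0$, so the convexity argument goes through. This gives $|d\pi_{R,\rho}(w)| \leq \frac{\rho}{R}|w|$ for every tangent vector $w$. Hence the operator norm of $d\pi_{R,\rho}$ is at most $\rho/R$, and $|\nabla\pi_{R,\rho}| \leq \frac{\rho}{R}$ in the operator-norm sense, which is the sense presumably intended.
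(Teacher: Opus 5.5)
Your proof is correct, and the Cartan--Hadamard and smoothness parts match the paper's. For the gradient bound you reach the same inequality by a more self-contained route.

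The paper works in polar coordinates with $g = dr^2 + g_r$. It uses $\partial_r g_r = 2\,\mathrm{Hess}\, r$, cites the Rauch (Hessian) comparison $\mathrm{Hess}\, r \geq r^{-1} g_r$ in nonpositive curvature, and integrates $\partial_r \log g_r(v,v) \geq 2/r$ from $\rho$ to $R$. You instead derive the comparison from the Jacobi equation alone, via $(|J|^2)'' \geq 2|J'|^2$, Cauchy--Schwarz, and convexity of $f = |J|$ with $f(0) = 0$.

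The two arguments prove the same inequality. For $v$ tangent to the unit sphere, $\sqrt{g_t(v,v)} = |J(t)|$ for the Jacobi field with $J(0)=0$ and $J'(0)=v$. Moreover $\mathrm{Hess}\, r(J,J) = \langle J', J\rangle = f f'$. So the paper's comparison is exactly $f' \geq f/t$, which your convexity gives via $0 = f(0) \geq f(t) - t f'(t)$.

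Your version does not rely on the comparison theorem: only the sign of $\langle \mathcal{R}(J,\dot\gamma)\dot\gamma, J\rangle$ enters. It also makes explicit that $|\nabla \pi_{R,\rho}| \leq \rho/R$ is an operator-norm bound. The paper's version is shorter once standard comparison theory is available. It also reuses the Hessian comparison behind Proposition \ref{prop:cartan-hadamard2}, where $\triangle r \geq r^{-1}$ is the two-dimensional trace of the same bound.
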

\begin{proof}
The fact that $\exp_{x_0}$ is a diffeomorphism is the classical theorem of Cartan and Hadamard (see for instance \cite[Theorem 6.2.2]{Petersen:geometry}. The fact that $\pi_{R, \rho}$ is smooth and depends smoothly on $R$ and $\rho$ follows from the fact that $\exp_{x_0}$ is a diffeomorphism and dilation in $\R^n$ by $\frac{\rho}{R}$ is a smooth map depending smoothly on $R$ and $\rho$.

For the gradient bound, let us write the metric $g$ in geodesic normal coordinates centered at $x_0$. Then $g = dr^2 + g_r$. By \cite[Proposition 3.2.11]{Petersen:geometry}, we have
\[ \frac{\partial}{\partial r}g_r = L_{\partial r} g = 2\mathrm{Hess}\ r. \]
By the Rauch comparison in nonpositive curvature (see \cite[Theorem 6.4.3]{Petersen:geometry}), we have
\[ \mathrm{Hess}\ r \geq \frac{1}{r}g_r. \]
Let $v$ be a unit vector tangent to $\mathbb{S}^{n-1}$. In these polar coordinates, we have along radial geodesics
\[ \log\left(\frac{\sqrt{g_R(v,v)}}{\sqrt{g_\rho(v,v)}}\right) = \frac{1}{2}\log\left(\frac{g_R(v, v)}{g_\rho(v,v)}\right) \geq \int_{\rho}^R \frac{1}{r}dr = \log(R/\rho). \]
In other words (recalling that $\sqrt{g(v,v)}$ is the length of $v$),
\[ g_R \geq \frac{R}{\rho}g_\rho. \]
Since $\pi_{R, \rho}$ is the identity on the spherical part of the polar coordinates, we have the desired gradient estimate.
\end{proof}

The second preliminary proposition establishes a monotonicity formula for the area of geodesic balls.

\begin{proposition}\label{prop:cartan-hadamard2}
Let $(\Sigma^2, g)$ be a complete simply connected surface with nonpositive curvature, and fix $x_0 \in \Sigma$. Let
\[ a(r) \coloneq \mathrm{Area}(B_r(x_0)), \ \ l(r) \coloneq \mathrm{Length}(\partial B_r(x_0)). \]
Then
\[ \left(\frac{a(r)}{\pi r^2}\right)' \geq 0\ \ \mathrm{and}\ \ l(r) \geq \frac{2a(r)}{r}. \]
\end{proposition}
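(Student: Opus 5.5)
Since $\Sigma$ is a complete simply connected surface of nonpositive curvature, Proposition \ref{prop:cartan-hadamard1} shows that $\exp_{x_0}$ is a diffeomorphism, so we may work in geodesic polar coordinates $(r,\theta)$ on $\Sigma\setminus\{x_0\}$, with $g = dr^2 + J(r,\theta)^2\,d\theta^2$. Here $J>0$ for $r>0$, $J(0,\theta)=0$, $\partial_r J(0,\theta)=1$, and $J$ satisfies the Jacobi equation $\partial_r^2 J = -K J \geq 0$. Then
\[ l(r) = \int_0^{2\pi} J(r,\theta)\,d\theta, \qquad a(r) = \int_0^r l(s)\,ds. \]
In particular $a$ is $C^1$ with $a' = l$. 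Both claimed inequalities will come from convexity of $J$ in $r$.

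First I show that $r\mapsto J(r,\theta)/r$ is nondecreasing for each fixed $\theta$. This is the two-dimensional content of the Rauch comparison already used in Proposition \ref{prop:cartan-hadamard1}: the estimate $g_R \geq \frac{R}{\rho}g_\rho$ there, stated for lengths, says exactly that $J(R,\theta)/R \geq J(\rho,\theta)/\rho$. Alternatively, set $\Phi = rJ' - J$. Then $\Phi(0)=0$ and $\Phi' = rJ'' \geq 0$, so $\Phi\ge 0$, i.e.\ $(J/r)' = \Phi/r^2 \geq 0$. Integrating in $\theta$ gives that $l(r)/r$ is nondecreasing.

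Next I derive the isoperimetric-type bound. Since $l(s)/s \le l(r)/r$ for $s\le r$,
\[ a(r) = \int_0^r l(s)\,ds = \int_0^r s\cdot\frac{l(s)}{s}\,ds \leq \frac{l(r)}{r}\int_0^r s\,ds = \frac{r\,l(r)}{2}, \]
which is $l(r) \geq 2a(r)/r$. The monotonicity then follows directly:
\[ \left(\frac{a}{\pi r^2}\right)' = \frac{r\,l(r) - 2a(r)}{\pi r^3} \geq 0. \]

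The main obstacle is only regularity. Polar coordinates are singular at $x_0$, and a priori one should check that $l$ is continuous so that $a'=l$. Because $\exp_{x_0}$ is a global diffeomorphism, $J$ is smooth on $(0,\infty)\times S^1$ and extends continuously to $r=0$ with $J\sim r$, so $J/r\to1$ as $r\to 0$. This is enough for the integrations above, and it also handles the behavior of $\Phi$ near $r=0$. No cut locus issues arise in the Cartan--Hadamard setting.
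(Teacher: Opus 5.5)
Your proof is correct and follows essentially the same route as the paper: both arguments hinge on the monotonicity of $l(r)/r$. The paper obtains it from $\partial_r\lambda \geq \lambda/r$ via the Rauch bound $\triangle r \geq r^{-1}$, while you obtain it directly from convexity of the Jacobi field $J$; your $\Phi = rJ' - J \geq 0$ is exactly $\triangle r \geq r^{-1}$ in two dimensions. The only difference is the order of the conclusions: you prove $l(r) \geq 2a(r)/r$ first by an integral comparison and then read off the monotonicity from $(a/\pi r^2)' = (rl-2a)/(\pi r^3)$, whereas the paper proves the monotonicity first and deduces the isoperimetric bound from it.
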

\begin{proof}
The second inequality follows directly from the first inequality. For the first inequality, working in geodesics polar coordinates centered at $x_0$, we have
\[ \mathrm{area} = \lambda(r, \theta)dr \wedge d\theta. \]
Recall by \cite[\S 2.1.3]{Petersen:geometry} that we have
\[ (\partial_r \lambda) dr\wedge d\theta = L_{\partial_r}\mathrm{area} = (\triangle r)\mathrm{area} = (\triangle r)\lambda dr\wedge d\theta. \]
By the Rauch comparison (see \cite[Theorem 6.4.3]{Petersen:geometry}), we have
\[ \triangle r \geq r^{-1}. \]
Hence,
\[ \partial_r\lambda \geq r^{-1}\lambda. \]
We compute
\begin{align*}
	\frac{d}{dr}\left(\frac{a(r)}{\pi r^2}\right)
	& = \frac{d}{dr}\left(\frac{\int_0^r \int_0^{2\pi} \lambda(\rho, \theta) d\rho \wedge d\theta}{\pi r^2}\right)\\
	& = \frac{\pi r^2\int_0^{2\pi}\lambda(r, \theta) d\theta - 2\pi r\int_0^r\int_0^{2\pi} \lambda(\rho, \theta) d\rho \wedge d\theta}{(\pi r^2)^2}\\
	& = \frac{\int_0^r\left[(2\pi \rho \int_0^{2\pi} \lambda(r,\theta)d\theta - 2\pi r \int_0^{2\pi} \lambda(\rho, \theta)d\theta\right]d\rho }{(\pi r^2)^2}.
\end{align*}
To show that this derivative is nonnegative, it suffices to check that
\[ r \mapsto \frac{\int_0^{2\pi}\lambda(r,\theta)d\theta}{2\pi r} \]
is nondecreasing (since the limit as $r \to 0$ is 1). We compute
\begin{align*}
	\frac{d}{dr}\left(\frac{\int_0^{2\pi}\lambda(r,\theta)d\theta}{2\pi r}\right)
	& = \frac{2\pi r \int_0^{2\pi} \partial_r\lambda d\theta - 2\pi\int_0^{2\pi} \lambda d\theta}{(2\pi r)^2}\\
	& \geq \frac{2\pi r \int_0^{2\pi} r^{-1}\lambda d\theta - 2\pi\int_0^{2\pi} \lambda d\theta}{(2\pi r)^2} = 0,
\end{align*}
which concludes the proof.
\end{proof}

We now proceed to the construction of a contracting map.

\begin{lemma}\label{lem:construction}
Let $\Sigma$ be a noncompact, simply connected surface. Let $\Sigma \to \R^3$ be a complete minimal immersion, and let $g$ denote the pullback of the Euclidean metric. Fix $x_0 \in \Sigma$. There is a constant $C > 0$ so that for any $\eps > 0$, there is a Lipschitz map $f : (\Sigma \times \R, g \oplus dt^2) \to \mathbb{S}^3$ satisfying
\begin{itemize}
	\item $f$ has degree one,
	\item $f$ is constant outside a compact set, and
	\item $f$ is $C \cdot \max\{A + \eps, (A+\eps)^2\}/r(x, t)^2$--contracting on 2--forms, where
	\[ r(x,t) \coloneq d_{g \oplus dt^2}((x,t), (x_0, 0)) \]
	and
	\[ A \coloneq \lim_{\rho \to \infty} \frac{\pi \rho^2}{\mathrm{Area}_g(B_\rho(x_0) \subset \Sigma)}. \]
\end{itemize}
\end{lemma}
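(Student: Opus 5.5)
Since $\Sigma$ is minimal in $\R^3$, the Gauss equation gives $K_\Sigma = -\frac12|A|^2 \le 0$. As $\Sigma$ is simply connected and complete, it is a two--dimensional Cartan--Hadamard manifold, so Propositions \ref{prop:cartan-hadamard1} and \ref{prop:cartan-hadamard2} apply. By Proposition \ref{prop:cartan-hadamard2}, $a(\rho)/\pi\rho^2$ is nondecreasing, so the limit defining $A$ exists in $[0,1]$. Given $\eps>0$, there is $\rho_0$ with $\pi\rho^2/a(\rho) \le A+\eps$ for $\rho\ge\rho_0$. Hence
\[ l(\rho)\ge \frac{2a(\rho)}{\rho} \ge \frac{2\pi\rho}{A+\eps}. \]

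\textbf{Construction.} Use polar coordinates $(s,\theta)\mapsto \exp_{x_0}(s\theta)$ on $\Sigma$ together with the $\R$ coordinate $t$. Work in the ``radial'' region of $\Sigma\times\R$: pick $R$ large and consider the product annulus/cylinder region
\[ U=\{(x,t): R\le s(x)\le 2R,\ |t|\le R\}, \]
or more cleanly a solid region around $(x_0,0)$ whose boundary collapses to a point. I will define $f$ in two stages.

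\emph{Stage 1.} Map each circle $\partial B_s(x_0)$ to the unit circle $S^1$ by a degree--one map of Lipschitz constant $\lesssim 2\pi/l(s)\le (A+\eps)/s$. Use the normalized arclength parametrization of $\partial B_\rho(x_0)$ at a fixed $\rho$, transported to other radii by the radial projection $\pi_{s,\rho}$ of Proposition \ref{prop:cartan-hadamard1}. Concretely, fix $\rho=\rho_0$, let $\theta_0:\partial B_{\rho_0}\to S^1$ be normalized arclength, and set $\Theta(x)=\theta_0(\pi_{s(x),\rho_0}(x))$. The derivative bound in the angular direction needs care. Proposition \ref{prop:cartan-hadamard1} gives only $|\nabla \pi|\le \rho_0/s$, which yields Lipschitz constant $\sim 2\pi/ s$ times the ratio $\rho_0/l(\rho_0)$. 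To get the sharper $(A+\eps)/s$ bound, I would instead reparametrize each level circle by its own normalized arclength, so that the tangential derivative is exactly $2\pi/l(s)$. The radial derivative of this angular function is controlled by the smooth dependence on $s$ together with the monotonicity $g_s\ge (s/\rho)g_\rho$. If it is troublesome, it suffices to note that only the $2$--form contraction matters, and $d\Theta\wedge ds$ has size $\le 2\pi/l(s)$, since $ds$ is unit and only the tangential part of $d\Theta$ contributes to the wedge.

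\emph{Stage 2.} Combine $\Theta$ with the two ``radial'' coordinates $(s,t)$. Let
\[ r=\sqrt{s^2+t^2}\approx r(x,t), \]
using polar coordinates on the half plane $\{(s,t): s\ge0\}$. Choose a map $F:\{(s,t)\}\times S^1\to \mathbb S^3$ of the form: the half--plane region $\{R_0\le r\le R_1\}$ maps to a $2$--disc $D^2$ (with its boundary arcs collapsed), and then a suspension--type map $D^2\times S^1\to \mathbb S^3$ of degree one, collapsing $\partial D^2\times S^1$ and $\{s=0\}$ to the base point. I would choose the half--plane map as $(s,t)\mapsto(\log r/\log(R_1/R_0),\ \arg)$, or a linear map in $r$ on a dyadic range $r\in[R,2R]$. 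In either case its differential has size $\lesssim 1/r$ in each direction, provided the $s$--coordinate is used only where $s\gtrsim r$; otherwise the collapse of $\{s=0\}$ handles it. Then every $2$--form component of $f=F(\cdot,\cdot,\Theta)$ is bounded by one of three products:
\begin{itemize}
\item $(1/r)(1/r)$, from the two half--plane directions;
\item $(1/r)\cdot(A+\eps)/s$, from one half--plane direction and the angle;
\item the angular--angular term, which vanishes since $S^1$ is $1$--dimensional.
\end{itemize}
Restricting the nontrivial part of the map to $s\ge r/2$ gives the bound $C\max\{1,A+\eps\}/r^2$. The factor $(A+\eps)^2$ in the statement presumably arises when normalizing so that the constant $C$ is independent of $\eps$. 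Degree one follows because $f$ restricted to a slice is a suspension of the degree--one circle map, and $f$ is constant outside the compact region $\{r\le 2R\}$.

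\textbf{Main obstacle.} I expect the main obstacle to be the degree--one bookkeeping near $s=0$ and near $\partial$ of the region. There one must collapse the angle while keeping the $2$--form bound $\lesssim 1/r^2$, without ever dividing by a small $s$. I would handle this by making $F$ constant on $\{s\le r/2\}$, i.e.\ choosing the disc map to send the sector $\{s\le r/2\}$ to $\partial D^2$. This keeps the product $(1/r)(A+\eps)/s$ bounded by $2(A+\eps)/r^2$ wherever it is nonzero.
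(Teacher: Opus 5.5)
There is a genuine gap: the bound you reach, $C\max\{1,A+\eps\}/r^2$, is not the bound in the statement, and the difference is essential. Since $A\le 1$, the stated bound is $C(A+\eps)/r^2$ in the relevant range. It is used in the proof of Theorem \ref{thm:main} with $A=0$ and $\eps\to 0$ to produce $\delta/r^2$--contracting maps for arbitrarily small $\delta$. This is needed because Theorem \ref{thm:obstruction} asks for contraction $C_n h$ with $h=\beta/(4\mathbf{r}^2)$, where $\beta<3$ and $C_n$ is a small fixed constant. Rescaling cannot help, because $r^2$ times the $2$--form contraction is scale invariant; the smallness has to come from the area growth. Your estimate contains the term $(1/r)(1/r)$ from the two half--plane directions $(r,\arg)$. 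That term carries no factor of $A+\eps$, so it never becomes small. Your remark that $(A+\eps)^2$ ``arises when normalizing'' misreads the statement: in the paper it is the product of two derivatives that are each small.

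The missing idea is to make the radial direction of $\Sigma\times\R$ cheap as well. The paper works on the logarithmically long annulus $R\le r\le Re^{1/(A+\eps)}$ with radial coordinate $(A+\eps)\log(r/R)$, whose derivative is $(A+\eps)/r$. Then only one direction has derivative of size $1/r$: your $\arg$ direction, which is the paper's $t$--direction on $\partial B_R$. The other two directions, $\Theta$ and $r$, both have derivatives $\lesssim (A+\eps)/r$. Hence every $2$--plane picks up at least one factor of $A+\eps$, and the pairwise products are $(A+\eps)/r^2$, $(A+\eps)/r^2$ and $(A+\eps)^2/r^2$. Your first option $(s,t)\mapsto(\log r/\log(R_1/R_0),\arg)$ does exactly this if you take $\log(R_1/R_0)=1/(A+\eps)$; the dyadic or linear option cannot work. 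With that choice, plus your sector restriction $s\ge r/2$ (the analogue of the paper's collar $|t|\le D$ with $R=\sqrt2 D$), your construction becomes essentially the paper's. The only difference is that you package it as one map $D^2\times S^1\to\mathbb{S}^3$ rather than two successive smashing maps.

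A smaller point concerns Stage 1. The transported parametrization $\Theta=\theta_0\circ\pi_{s,\rho_0}$ already gives $|\nabla\Theta|\le 2\pi\rho_0/(l(\rho_0)s)\le (A+\eps)/s$ for $s\ge\rho_0$, by your choice of $\rho_0$. It also has no radial or $t$ derivative, and it is exactly what the paper uses, with $\rho_0=D$. The alternative of parametrizing each circle by its own arclength is worse, not sharper: its uncontrolled radial derivative enters through $d\Theta\wedge dt$, and hence through $d\Theta\wedge du$ and $d\Theta\wedge dv$. So your justification that only $d\Theta\wedge ds$ matters is incorrect.
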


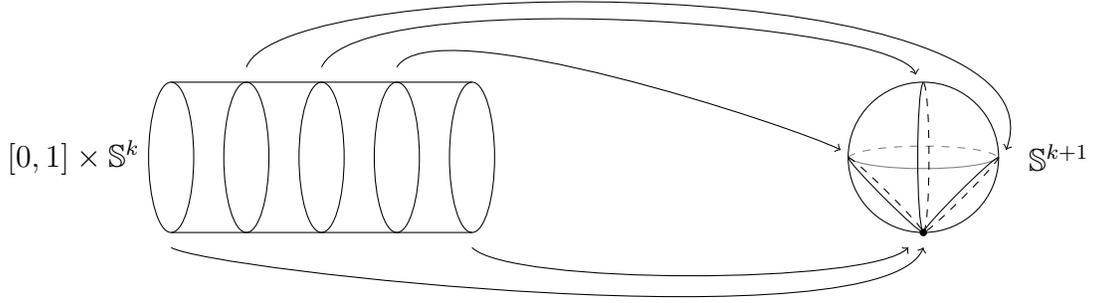
\begin{figure}
\begin{tikzpicture}
	\draw (-6, 1) -- (-2, 1);
	\draw (-6, -1) -- (-2, -1);
	\draw (-6, 0) ellipse (0.3 and 1);
	\draw (-5, 0) ellipse (0.3 and 1);
	\draw (-4, 0) ellipse (0.3 and 1);
	\draw (-3, 0) ellipse (0.3 and 1);
	\draw (-2, 0) ellipse (0.3 and 1);
	\draw (-7.3, 0) node {$[0, 1] \times \mathbb{S}^k$};
	\draw (5.8, 0) node {$\mathbb{S}^{k+1}$};
	
	\draw[gray] (3, 0) .. controls (3.2, -0.2) and (4.8, -0.2) .. (5, 0);
	\draw[gray, dashed] (3, 0) .. controls (3.2, 0.2) and (4.8, 0.2) .. (5, 0);
	\draw (4, -1) .. controls (3.9, -0.9) and (3.9, 0.9) .. (4, 1);
	\draw[dashed] (4, -1) .. controls (4.1, -0.9) and (4.1, 0.9) .. (4, 1);
	\draw (3, 0) .. controls (3, -0.08) and (3.92, -1) .. (4, -1);
	\draw[dashed] (3, 0) .. controls (3.08, 0) and (4, -0.92) .. (4, -1);
	\draw (5, 0) .. controls (4.92, 0) and (4, -0.92) .. (4, -1);
	\draw[dashed] (5, 0) .. controls (5, -0.08) and (4.08, -1) .. (4, -1);
	\draw (4, 0) circle (1);
	\fill[black] (4, -1) circle (0.05);
	
	\draw[->] (-6, -1.2) .. controls (-5.5, -1.5) and (3.5, -2.5) .. (4, -1.2);
	\draw[->] (-5, 1.2) .. controls (-4.5, 2.5) and (6, 2.5) .. (5.1, 0.1);
	\draw[->] (-4, 1.2) .. controls (-3.5, 2.3) and (3.7, 1.8) .. (3.9, 1.1);
	\draw[->] (-3, 1.2) .. controls (-2.5, 2) and (2.3, 0.4) .. (2.9, 0.1);
	\draw[->] (-2, -1.2) .. controls (-1.5, -1.7) and (3.3, -1.7) .. (3.8, -1.2);
\end{tikzpicture}
\caption{A ``smashing'' map from $[0, 1] \times \mathbb{S}^k$ to $\mathbb{S}^{k+1}$.}
\label{fig:smash}
\end{figure}

\begin{proof}
First, we set up some notation. By the Gauss equation and the minimality of $\Sigma$, the pullback metric $g$ has nonpositive curvature. Then by Proposition \ref{prop:cartan-hadamard2}, the function $\rho \mapsto \frac{\pi \rho^2}{a(\rho)}$ is nonincreasing, so the limit $A$ is well-defined and finite. Choose $D > 0$ so that $\frac{\pi D^2}{a(D)} \leq A + \eps$. By Proposition \ref{prop:cartan-hadamard2}, we have $l(D) \geq \frac{2\pi D}{A+\eps}$. We set $R = \sqrt{2}D$.

{\bf Step 0.} \emph{Map from $\partial B_r^{\Sigma}$ to $\mathbb{S}^1$.} Let $f_0 : \partial B_D^{\Sigma}(x_0) \to \mathbb{S}^1$ be a degree one smooth map with $|\nabla f_0| \leq \frac{A + \eps}{D}$ (i.e.\ $f_0$ is the inverse of a constant speed parametrization of $\partial B_D^{\Sigma}(x_0)$ by $\mathbb{S}^1$).

{\bf Step 1.} \emph{Map from $\partial B_r^{\Sigma \times \R}$ to $\mathbb{S}^2$.} We first handle the ``collar'' of $\partial B_r^{\Sigma \times \R}$ that is far from $x_0$ in the $\Sigma$ factor of the product. Note that
\[ \partial B_R^{\Sigma \times \R}(x_0, 0) = \bigcup_{t \in [-R, R]} \partial \overline{B}_{\sqrt{R^2 - t^2}}^{\Sigma}(x_0)\times \{t\}. \]
We define a Lipschitz map
\[ f_1^{\mathrm{collar}} : \partial B_R^{\Sigma \times \R}(x_0, 0) \cap \{t \in [-D, D]\} \to [-1, 1] \times \mathbb{S}^1 \]
by
\[
f_1^{\mathrm{collar}}(x, t) = \{t/D\} \times f_0(\pi_{\sqrt{R^2 - t^2}, D}^{\Sigma}(x)),
\]
where $\pi_{\sqrt{R^2 - t^2}, D}^{\Sigma}$ is the radial projection map in $\Sigma$ defined in Proposition \ref{prop:cartan-hadamard1}. By construction and Proposition \ref{prop:cartan-hadamard1}, there is an orthonormal basis $\{e_1, e_2\}$ at every point so that
\[ |\nabla_{e_1}f_1^{\mathrm{collar}}| \leq \frac{1}{D},\ \ |\nabla_{e_2}f_1^{\mathrm{collar}}| \leq \frac{A+\eps}{D}. \]

To handle the complement of the ``collar'' of $\partial B_r^{\Sigma \times \R}$, we require a smashing map. Let $\mathrm{sm}_1 : [-1, 1] \times \mathbb{S}^1 \to \mathbb{S}^2$ be a degree one Lipschitz map with the property that
\[ \mathrm{sm}_1([-1, 1] \times \{(0, -1) \in \mathbb{S}^1\}) = \mathrm{sm}_1(\{-1\} \times \mathbb{S}^1) = \mathrm{sm}_1(\{1\} \times \mathbb{S}^1) = (0, 0, -1) \in \mathbb{S}^2. \]
($\mathrm{sm}_1$ is a ``smashing map,'' see Figure \ref{fig:smash} and \cite[Proof of Theorem 6.12]{Gromov-Lawson:spin}.) We can arrange this map to have $|\nabla \mathrm{sm}_1| \leq \pi$.

Finally, we can construct our map from $\partial B_r^{\Sigma \times \R}$ to $\mathbb{S}^2$. Let $f_1 : \partial B_R^{\Sigma \times \R}(x_0, 0) \to \mathbb{S}^2$ be the degree one Lipschitz map given by
\[ f_1(x,t) = \begin{cases}
(0, 0, -1) & |t| \geq D\\
\mathrm{sm}_1(f_1^{\mathrm{collar}}(x,t)) & |t| < D.
\end{cases} \]
Then there is an orthonormal basis $\{e_1, e_2\}$ at every point so that 
\[ |\nabla_{e_1}f_1| \leq \frac{\pi}{D},\ \ |\nabla_{e_2}f_1| \leq \frac{\pi(A+\eps)}{D}. \]

{\bf Step 2.} \emph{Map from $\Sigma \times \R$ to $\mathbb{S}^3$.} We first handle a large annulus in $\Sigma \times \R$. Let
\[ f_2^{\mathrm{annulus}}: \{R \leq r(x,t) \leq Re^{1/(A+\eps)}\} \subset \Sigma \times \R \to [0, 1] \times \mathbb{S}^2 \]
be the Lipschitz map given by
\[ f_2^{\mathrm{annulus}}(x,t) = \left\{(A+\eps)\log\left(\frac{r(x,t)}{R}\right)\right\} \times f_1(\pi_{r(x,t), R}^{\Sigma \times \R}(x)), \]
where $\pi_{r(x,t), R}^{\Sigma \times \R}$ is the radial projection map in $\Sigma \times \R$ from Proposition \ref{prop:cartan-hadamard1}. By construction and Proposition \ref{prop:cartan-hadamard1} (using the fact that $\Sigma \times \R$ is also complete, simply connected, and has nonpositive curvature), there is an orthonormal basis $\{e_1, e_2, e_3\}$ at every point so that
\begin{align*}
|\nabla_{e_1}f_2^{\mathrm{annulus}}| & \leq \frac{\pi}{D} \cdot\frac{R}{r(x,t)} = \frac{\sqrt{2}\pi}{r(x,t)},\\
|\nabla_{e_2}f_2^{\mathrm{annulus}}| & \leq \frac{\pi(A+\eps)}{D}\cdot \frac{R}{r(x,t)} = \frac{\sqrt{2}\pi(A+\eps)}{r(x,t)},\\
|\nabla_{e_3}f_2^{\mathrm{annulus}}| & \leq \frac{A+\eps}{r(x,t)}.
\end{align*}

To handle the complement of the annulus, we require a smashing map. Let $\mathrm{sm}_2: [0, 1] \times \mathbb{S}^2 \to \mathbb{S}^3$ be a degree one Lipschitz map with the property that
\[ \mathrm{sm}_2([0, 1] \times \{(0,0, -1)\}) = \mathrm{sm}_2(\{0\} \times \mathbb{S}^2) = \mathrm{sm}_2(\{1\} \times \mathbb{S}^2) = (0, 0, 0, -1) \in \mathbb{S}^3. \]
($\mathrm{sm}_2$ is a ``smashing map,'' see Figure \ref{fig:smash} and \cite[Proof of Theorem 6.12]{Gromov-Lawson:spin}.) We can arrange this map to have $|\nabla \mathrm{sm}_2| \leq 2\pi$.

At last, we define $f : \Sigma \times \R \to \mathbb{S}^3$ by
\[ f(x, t) = \begin{cases}
(0, 0, 0, -1) & r(x,t) \leq R\\
\mathrm{sm}_2(f_2^{\mathrm{annulus}}(x,t)) & r(x,t) \in (R, Re^{1/(A+\eps)})\\
(0, 0, 0, -1)& r(x,t) \geq Re^{\pi/(A + \eps)}.
\end{cases} \]
Then $f$ is a degree one Lipschitz map, constant outside a compact set, and there is an orthonormal basis $\{e_1, e_2, e_3\}$ at every point so that
\[ |\nabla_{e_1}f| \leq \frac{2\sqrt{2}\pi^2}{r(x,t)}, \ \ |\nabla_{e_2}f| \leq \frac{2\sqrt{2}\pi^2(A+\eps)}{r(x,t)}, \ \ |\nabla_{e_3}f| \leq \frac{2\pi(A+\eps)}{r(x,t)}. \]
Hence, $f$ is $(8\pi^4)\cdot \max\{A+\eps, (A+\eps)^2\}/r(x,t)^2$--contracting on 2--forms.
\end{proof}

\subsection{Proof of Theorem \ref{thm:main}}
Let $\Sigma^2 \to \R^3$ be a complete two--sided stable minimal immersion. Since two--sided stability passes to covering space (by lifting the positive solution of the Jacobi operator using \cite[Theorem 1]{Fischer-Colbrie-Schoen:stable}), it suffices to assume that $\Sigma$ is simply connected.

Fix $x_0 \in \Sigma$ (and assume that it maps to $\vec{0} \in \R^3$ under the minimal immersion). Suppose for contradiction that $\Sigma$ satisfies
\[ \lim_{\rho \to \infty} \frac{\mathrm{Area}(B_\rho(x_0))}{\pi \rho^2} = +\infty. \]
Then by Lemma \ref{lem:construction}, for any $\delta > 0$, there is a Lipschitz map $f : \Sigma \times \R \to \mathbb{S}^3$ satisfying
\begin{itemize}
	\item $f$ has degree one,
	\item $f$ is constant outside a compact set, and
	\item $f$ is $\delta/r(x,t)^2$--contracting on 2--forms.
\end{itemize}
Moreover, by Lemma \ref{lem:stable-spec}, taking $\beta = 1$ for instance, $\Sigma \times \R$ satisfies
\[ -2\triangle_{\Sigma \times \R} + R_{\Sigma \times \R} - \frac{1}{4r(x,t)^2} \geq 0, \]
where we used the fact that the extrinsic distance from $\vec{0}$ is at most the intrinsic distance from $(x_0, 0).$\footnote{To avoid the singularity at $r(x,t) = 0$, we can take a smoothing of $\max\{C, \frac{1}{4r(x,t)^2}\}$, which presents no problem because the map $f$ constructed in Lemma \ref{lem:construction} is constant inside a geodesic ball of fixed radius.} Since every oriented surface is spin,\footnote{In higher dimensions, it is even true that every two--sided codimension one immersion in Euclidean space is spin (see \cite[Proposition 1.4.1]{Ginoux:spin}).} these conclusions contradict Theorem \ref{thm:obstruction}. It now follows from the log--cutoff (see for instance \cite[Theorem 2.10]{Colding-Minicozzi:minimal}) that $\Sigma$ is flat.

In fact, this argument gives some explicit, uniform, a priori constant $C > 0$ so that any simply connected complete two--sided stable minimal immersion in $\R^3$ satisfies
\[ \mathrm{Area}(B_\rho(x_0)) \leq C \rho^2. \]

 \bibliographystyle{amsalpha}
 \bibliography{stable_and_spin}

\end{document}